\date{Draft version --- \today}
\newcommand{\indic}[1]{\mathbf{1}_{#1}}
\newcommand{\indica}[1]{\mathbf{1}_{\{#1\}}}
\newtheorem{theorem}{Theorem}
\newtheorem{conj}[theorem]{Conjecture}
\newtheorem{lemma}[theorem]{Lemma}
\newtheorem{coro}[theorem]{Corollary}
\theoremstyle{definition}
\newtheorem{defn}[theorem]{Definition}
\newtheorem{remark}[theorem]{Remark}
\newcommand{\eps}{\varepsilon}
\newcommand{\E}{\mathbb{E}}
\renewcommand{\P}{\mathbb{P}}
\newcommand{\R}{\mathbb{R}}
\newcommand{\FF}{\mathcal{F}}
\newcommand{\bT}{\mathbf{T}}
\newcommand{\La}{\Lambda}
\newcommand{\N} {\mathbb{N}}
\newcommand{\eqlaw}{\overset{d}{=} }
\def\cP{\mathcal{P}}
\def\cE{\mathcal{E}}
\def\cD{\mathcal{D}}
\def\fg{g}
\def\bes{\bar{\alpha}}
\newcommand{\captionfonts}{\small}
\long\def\@makecaption#1#2{%
  \vskip\abovecaptionskip
  \sbox\@tempboxa{{\captionfonts #1: #2}}%
  \ifdim \wd\@tempboxa >\hsize
    {\captionfonts #1: #2\par}
  \else
    \hbox to\hsize{\hfil\box\@tempboxa\hfil}%
  \fi
  \vskip\belowcaptionskip}
\begin{document}

\author{Julien Berestycki$^1$, Nathana\"{e}l Berestycki$^2$, and Vlada Limic$^3$}

\title{Asymptotic sampling formulae for $\Lambda$-coalescents}

\maketitle

\begin{abstract}
\noindent
We present a robust method which translates information on the speed of coming down from infinity of a genealogical tree into sampling formulae for the underlying population.
We apply these results to population dynamics where the genealogy is given by a $\La$-coalescent. This allows us to derive an exact formula for the asymptotic behavior of the site and allele frequency spectrum and the number of segregating sites, as the sample size tends to $\infty$.
Some of our results hold in the case of a general $\La$-coalescent that comes down from infinity, but we obtain more precise information under a regular variation assumption.
In this case, we obtain results of independent interest for the time at which a mutation uniformly chosen at random was generated. This exhibits a phase transition at $\alpha=3/2$, where $\alpha \in(1,2)$ is the exponent of regular variation.
\end{abstract}

\noindent {\em AMS 2000 Subject Classification.}
60J25, 60F99, 92D25

\medskip \noindent
{\em Key words and phrases.}
$\Lambda$-coalescents, speed of coming down from infinity, exchangeable coalescents, sampling formulae, infinite allele model, genetic variation.

\vfill
{\small \noindent
1. Universit\'e Pierre et Marie Curie - Paris VI. Supported in part by ANR MAEV and ANR MANEGE. \\
2. University of Cambridge. Supported in part by EPSRC grant EP/GO55068/1 and EP/I03372X/1. \\
3. Universit\'e de Provence. Research supported in part by
NSERC Discovery Grant, by Alfred P. Sloan Research Fellowship, and by ANR MAEV research grant.}


\clearpage


\section{Introduction and main results}
Coalescents with multiple collisions,
also known as {\em $\Lambda$-coalescents}, are a class of Markovian coalescence models, introduced and first studied by Pitman \cite{pit99} and independently by Sagitov \cite{sag99}, and were
already implicit in a contemporaneous work of Donnelly and Kurtz in \cite{dk99}.
They arise naturally as scaling limits for the genealogy of exchangeable population dynamics. This connection to population genetics has motivated a large number of works around the study of $\La$-coalescents, an introduction to which may be found in the recent surveys
\cite{bertoin, ensaios} for instance.

The following question is natural in the context of population genetics: assuming that the genealogical tree of a sample is a $\Lambda$-coalescent (definitions will be given below), how much genetic variation do we expect to see? A complete answer exists in the special case where only pairwise collisions are possible, due to the celebrated Ewens sampling formula~\cite{ewens_sam} for the Kingman coalescent~\cite{king82b}.
More recently, partial results have been obtained by Berestycki et al.~\cite{bbs2} in the particular case of Beta-coalescents (for a general overview of previous results on the subject, we refer the reader to Durrett~\cite{durrettDNA} or Berestycki~\cite{ensaios}.)

\medskip The main goal of this paper is to address this question in general, by providing a robust method which translates information about the \emph{speed of coming down from infinity} of the genealogical tree into an explicit asymptotic formulae (as the sample size increases to $\infty$) for
quantifying the genetic variation. Since the speed of coming down from infinity was recently analyzed by the authors in \cite{bbl1}, this method in combination with results from \cite{bbl1} enables us to obtain the following results:

(a) In Theorem \ref{T:A_n}, we obtain in complete generality (i.e., for arbitrary finite measures $\Lambda$ such that the corresponding $\La$-coalescent comes down from infinity) a deterministic asymptotic rate of growth for the number of distinct alleles in a sample and the number of segregating sites (the famous SNP count, or single nucleotide polymorphism). The formula involves a certain function $\psi(q)$, which is the Laplace exponent of the subordinator whose L\'evy measure is precisely $x^{-2} \La(dx)$. Furthermore, the above convergence in probability is strengthened to an almost sure convergence, provided that the measure $\Lambda$ satisfies an additional regular variation condition in the neighborhood of zero (precise assumptions will be given below).

(b) In Theorem \ref{T:spectrum} we derive explicit almost sure asymptotic formulae for the \emph{frequency spectrum}, in both the infinite site and the infinite allele models, in the case where $\La$ is regularly varying near zero.

\medskip This last result is a significant improvement and a generalization of previous work of Berestycki et al.~\cite{bbs1, bbs2} and of Schweinsberg~\cite{Schweinsberg2009}, both in the sense that the result is valid for
more general measures $\La$, and in the sense that the convergence holds almost surely rather than in probability.
Our methodology is completely different from that of~\cite{bbs2} which relied on an embedding into stable continuous random trees (CRT), allowing for explicit computations. As explained above, the argument here is based on the recent work by the authors on the speed of coming down from infinity~\cite{bbl1}, and a novel general method which translates such results into results about sampling formulae.
This method is more robust than previous approaches to this problem, which explains why the results here are both stronger and more general.

We note that the asymptotics in probability for the number of distinct alleles in a sample, under the model where the genealogy is driven by the general (regular) $\Xi$-coalescent dynamics,  was obtained in parallel by Limic~\cite{sampl_xi} using an adaptation of the martingale method that led to the results in \cite{bbl1} and \cite{scdi_xi} (see Remark \ref{R:xi case}).

In the sequel, we denote by $\Rightarrow$ the convergence in distribution, and by $\eqlaw$ the equivalence in distribution.
We also use the standard Bachmann-Landau notation $\sim, O(\cdot), o(\cdot),\asymp$ for comparing asymptotic behavior of deterministic and stochastic functions and sequences.

\subsection{Mutation models}
\label{S:asym samp}

We now describe the underlying framework for the sampling results in more detail.
Consider a sample of $n$ individuals, where $n$ is a fixed number tending to infinity. Assume that the genealogical relationship between these individuals is given by a $\Lambda$-coalescent, where $\Lambda$ is an arbitrary finite measure on $[0,1]$. That is, the genealogical tree is a Markov process $(\Pi^n(t), t \ge 0)$ on the space of partitions of $\{1, \ldots, n \}$ with the following transition rates: whenever $\Pi^n$ has $b$ blocks, any $k$-tuple of them merges at rate $\lambda_{b,k}: = \int_0^1 x^{k-2}(1-x)^{b-k} \Lambda(dx)$.

In order to discuss genetic variation, we need to specify a mutation model. The two most widely used and tractable models are the \emph{infinite sites model} and the \emph{infinite alleles model}. To familiarize oneself with these models, it is also useful to think in terms of the forward-in-time evolution dynamics for the whole population (and not only in terms of the backward-in-time coalescent dynamics).

In the classical \emph{infinite sites model},
introduced by Kimura~\cite{kimura} in 1969,
any individual is affected by neutral mutations at constant rate $\theta>0$.
Here it is also
assumed that the number of loci (the size of the genome) is large, so that each mutation occurs at
a new locus. In particular, if an individual is affected by a
mutation, then \emph{all} the descendants of this individual carry
this mutation (see Figure 1). Conversely, the genetic type of any individual in the sample depends on the entire history of its ancestral lineage. We denote by
$S_n$ the {\em number of segregating sites}, or the total number of distinct genetic types in the sense just described. This is the same as the omnipresent SNP count (single nucleotide polymorphism) from the biological literature.

The \emph{infinite alleles model} is similar but with one difference.
As above, any individual is affected by a mutation at constant rate $\theta$. However, it is now assumed that every mutation changes the allelic type of the individual into something new, distinct from anything else (already seen or yet unseen) in the population. Thus, the allelic type of an individual in the sample is entirely determined by the most recent mutation affecting the corresponding ancestral lineage.
The {\em allelic partition} is the partition of the sample (represented by a partition of $\{1, \ldots, n\}$) obtained by grouping together the individuals that carry the same allelic type. Denote by $A_n$ the number of blocks in this partition, or equivalently, the total number of allelic types expressed in the sample.

\begin{remark}
  The two models differ only by the amount of information that is assumed to be available in the sample. In the infinite sites model the assumption is that the precise allelic type (e.g., the entire DNA sequence) is known for each individual in the sample. On the other hand,
  in the infinite alleles model, the only available information is whether two individuals carry the same type or not. Hence for different types we do not know how they differ.

  Thus the infinite alleles model contains less information than the infinite sites model, and is more appropriate in practice for situations where the only available information is, for example, based on observed physiological differences. On the other hand the infinite sites model is more natural when the full genetic information (the DNA sequences of each individual in the sample) is available.
\end{remark}

\medskip The above random variables can be realized in a natural way on a common probability space as follows. Consider a $\Lambda$-coalescent $(\Pi_t,t\ge0)$ that comes down from infinity,
and let $\bT$ be the associated coalescent tree. Then $\bT$ is a tree with infinitely many leaves $1, 2, \ldots$, and the root given by the most recent common ancestor among all the individuals.
Each branch of $\bT$ is endowed with a positive number, its length or the size of the interval of time that elapsed between the two defining coalescent events for this branch (the one that started and the one that ended it) .
Let $\cP$ be
a Poisson process of mutations on the branches of $\bT$, where the intensity of mutations is
constant and equal to $\theta$ per unit length.
Restricting $\bT$ to the first $n$ leaves produces a finite tree  (even if the $\La$-coalescent does not come down from infinity), denoted by $\bT_n$, that has the law generated by the same $\Lambda$-coalescent started from $n$ particles. The restriction of $\cP$ to $\bT_n$ is identified as
the mutation process on $\bT_n$, and it is a sufficient statistic for
$S_n$ and $A_n$.
It is useful to note here that $\bT$ and $\cP$ alone determine, simultaneously for all $n$, the values of $A_n$ and $S_n$, as well as various related quantities to be introduced in the sequel.
Moreover, the coupling induced by this procedure between $\bT_n$ and $\bT_m$ for $m<n$, is canonical from the sampling perspective, in that the mutations that arrive onto $\bT_n$ also arrive onto $\bT_m$.
On the asymptotically unlikely event $\{\cP \cap \bT_n = \emptyset\}$, we declare $A_n = S_n = 0$.

\begin{center}
\includegraphics{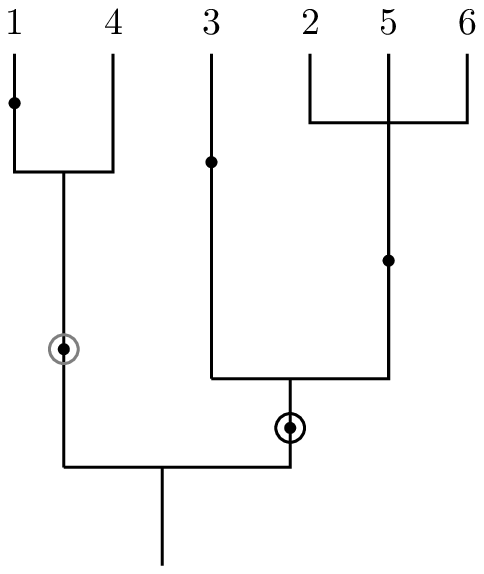}
\end{center}

\noindent
{\small Figure 1:
The genealogical tree $\bT_n$ for a sample of size $n = 6.$
The mutations on the (vertical) branches of $\bT_n$ are indicated as dots.
The dot encircled in black corresponds to the mutation which
is not seen under the infinite alleles model.
The dot encircled in gray will be referred to in Section \ref{S:sampl_formul}.
Thus we have $S_n = 5$ while $A_n=4$.}
\setcounter{figure}{1}

\subsection{Sampling formulae}
Let $\Lambda$ be a finite measure on $[0,1]$.
We will assume without further mention that $\La({1}) = 0$
(for reasons why this can be done without loss of full generality see any of~\cite{bbl1,pit99,sch1}).
\begin{defn}\label{D:regvar}
We say that $\Lambda$ has (strong) $\alpha$-regular variation at zero if
$\Lambda(dx)= f(x)dx$ where
$f(x) \sim A x^{1- \alpha}$ as $x \to 0$ for some $1< \alpha <2$ and $A>0$.
\end{defn}

For any given finite measure $\Lambda$ on $[0,1]$, associate a function $\psi_\La=\psi$ defined by
\begin{equation}
\label{D:psi} \psi(q):= \int_{[0,1]}(e^{-qx}-1+qx)x^{-2}\Lambda(dx).
\end{equation}
The function $\psi$ is the Laplace exponent of a L\'evy process, which is intimately connected with the behavior of the $\La$-coalescent. These links are discussed in a companion paper \cite{particle_new}.

In this paper we will usually require that
\begin{equation}
  \label{E:grey0}
  \int_1^\infty \frac{dq}{\psi(q)} < \infty,
\end{equation}
which is known as {\em Grey's condition}. As was proved by Bertoin and Le Gall~\cite{blg3} (see also \cite{particle_new} for a probabilistic proof),
this is equivalent to the requirement that the $\La$-coalescent comes down from infinity.
One can check (see e.g. \cite{FellerV2} XIII.6) that in the case of strong $\alpha$-regular variation,
\begin{equation}
\label{Epsi strongreg}
\psi(q) \sim \frac{A \Gamma(2-\alpha)}{\alpha(\alpha -1)} q^\alpha, \mbox{ as $q \to \infty$},
\end{equation}
where $A$ is the constant from Definition \ref{D:regvar},
so in particular the Grey condition (\ref{E:grey0})
holds if $\alpha\in (1,2)$. Our first result concerns the asymptotic behavior of the number $S_n$ of segregating sites and the size $A_n$ of the allelic partition.

\begin{theorem} \label{T:A_n} Assume (\ref{E:grey0}) and let
 $X_n$ denote either $A_n$ or $S_n$. Then
\begin{equation}\label{E:An general}
\frac{X_n }
{\displaystyle \int_1^n {q}{\psi(q)^{-1}}\,dq }\longrightarrow \theta,
\end{equation}
in probability as $n \to \infty$.
Moreover, if $\Lambda$ has (strong) $\alpha$-regular variation  at zero,
then the above convergence holds almost surely, implying
\begin{equation}\label{E:TAn-strong}
n^{\alpha-2}{X_n} \longrightarrow \theta B, \mbox{ almost surely,}
\end{equation}
where $B = B(A, \alpha) := \alpha(\alpha -1) / [A \Gamma(2-\alpha)(2-\alpha)]$.
\end{theorem}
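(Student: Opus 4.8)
The plan is to reduce both statistics to the total branch length $L_n$ of $\bT_n$ and then read off its asymptotics from the speed of coming down from infinity obtained in \cite{bbl1}. Write $(N^{(n)}_t)_{t\ge0}$ for the number of blocks of the $\La$-coalescent restricted to the first $n$ leaves, so that $N^{(n)}_0=n$, $N^{(n)}_t\downarrow 1$, and
\begin{equation}
L_n=\int_0^\infty N^{(n)}_t\,dt .
\end{equation}
Conditionally on $\bT_n$ one has $S_n\sim\poi(\theta L_n)$, since $\cP$ deposits mutations at rate $\theta$ per unit length. One checks from the elementary bound $\psi(q)\le Cq^2$ that $\int_1^n q\,\psi(q)^{-1}\,dq\to\infty$, hence $L_n\to\infty$, and Poisson concentration then reduces the claim for $S_n$ to showing that $L_n$ is asymptotic to $\int_1^n q\,\psi(q)^{-1}\,dq$.

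First I would set up the deterministic skeleton. When $(N^{(n)}_t)$ has $b$ blocks, its expected rate of decrease equals $\phi(b)=\int_{[0,1]}x^{-2}\big(bx-1+(1-x)^b\big)\Lambda(dx)$, and a direct comparison with (\ref{D:psi}) gives $\phi(b)\sim\psi(b)$ as $b\to\infty$. Thus $(N^{(n)}_t)$ is governed to leading order by the autonomous equation $\dot N=-\psi(N)$, whose solution issued from $+\infty$ is the speed function $v$ defined by $\int_{v(t)}^\infty\psi(q)^{-1}\,dq=t$; the input from \cite{bbl1} is that, for the coalescent descended from infinity, $N_t/v(t)\to1$ in probability, and almost surely under strong $\alpha$-regular variation, as $t\to0$. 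The substitution $t=\int_q^\infty\psi(s)^{-1}\,ds$ (so that $v(t)=q$ and $dt=-\psi(q)^{-1}\,dq$) rewrites the length integral in the block variable $q$; since for $\bT_n$ the block number runs from $n$ down to $1$, this yields $L_n\sim\int_1^n q\,\psi(q)^{-1}\,dq$, the bounded-block regime near the lower limit contributing only $O(1)$.

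The main obstacle is to promote this heuristic to genuine concentration, and the difficulty is concentrated at small times (large block numbers), where $N^{(n)}_t$ is of order $n$, the integrand of $L_n$ is largest, and one must simultaneously control the fluctuations of $N^{(n)}_t$ about $v(t)$ and the discrepancy between starting from $n$ and descending from infinity. I would treat the range $N^{(n)}_t=O(1)$ crudely (it contributes a bounded amount to $L_n$) and, on the complementary range, feed the probabilistic (respectively almost sure) estimates of \cite{bbl1} into $L_n$; monotonicity of $n\mapsto L_n$ and $n\mapsto S_n$, evaluated along a geometric subsequence, then upgrades convergence in probability to almost sure convergence in the regular variation case.

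For $A_n$ I would use that, in the infinite alleles model, two leaves share a type exactly when the path joining them in $\bT_n$ carries no mutation; equivalently, cutting $\bT_n$ at the points of $\cP$ splits it into $S_n+1$ connected components, and $A_n$ is the number of those that contain a leaf. Hence $A_n\le S_n+1$, and $S_n+1-A_n$ counts the leafless components, each chargeable to a distinct \emph{hidden} mutation, i.e.\ a point of $\cP$ that is the most recent mutation of no sampled leaf. A first moment computation shows that hidden mutations require a second, more recent mutation on every descending lineage, so they are confined away from the tips and number $o\big(\int_1^n q\,\psi(q)^{-1}\,dq\big)$; thus $A_n=S_n(1+o(1))$, and together with the previous steps and the monotonicity of $A_n$ this gives (\ref{E:An general}) for $X_n=A_n$ as well. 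Finally, under strong $\alpha$-regular variation (\ref{Epsi strongreg}) gives $\psi(q)\sim c\,q^\alpha$ with $c=A\Gamma(2-\alpha)/[\alpha(\alpha-1)]$, so
\begin{equation}
\int_1^n\frac{q}{\psi(q)}\,dq\sim\frac1c\int_1^n q^{1-\alpha}\,dq\sim\frac{n^{2-\alpha}}{c(2-\alpha)}=B\,n^{2-\alpha},
\end{equation}
with $B=\alpha(\alpha-1)/[A\Gamma(2-\alpha)(2-\alpha)]$, which is exactly (\ref{E:TAn-strong}).
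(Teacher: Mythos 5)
Your treatment of $S_n$ is essentially the paper's own proof: reduce to the total length $L_n$, invoke the convergence of $\int_0^s N^{\La,n}(t)\,dt / \int_0^s v(t_n+t)\,dt$ from \cite{bbl1} (Theorem 5 there, which is precisely what absorbs the ``descending from $n$ versus from infinity'' discrepancy you flag), change variables $q=v(t)$, and use Poisson concentration. Your identity for $A_n$ --- cutting $\bT_n$ at the points of $\cP$ gives $S_n+1$ components, of which $A_n$ contain a leaf --- is in fact a cleaner, exact version of the paper's one-sided device (the paper only lower-bounds $A_n$ by the number of mutations that are ``unblocked'' along the path to the smallest-labelled descendant leaf).

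However, there is a genuine gap in the $A_n$ part, and it sits exactly where the paper does its hardest work. The whole quantitative content of ``hidden mutations are confined away from the tips and number $o(\cdot)$'' is the statement that the age of a uniformly chosen mutation is small. Concretely, your first-moment computation gives that the conditional expectation of the number of hidden (or blocked) mutations is at most of order $\theta S_n\,\E[M_n^*\mid \bT_n]$ with $\E[M_n^*\mid\bT_n]=\int_0^{\tau_1^n} u\,N^{\La,n}(u)\,du \big/ \int_0^{\tau_1^n} N^{\La,n}(u)\,du$, and one must show (i) that this ratio tends to $0$ in probability for the in-probability statement, and (ii) that its expectation is $O(n^{-\delta})$ for some $\delta>0$ in the regular variation case. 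Point (ii) is needed because your proposed a.s. upgrade for $A_n$ does not follow from ``previous steps plus monotonicity'': $A_n/S_n$ is not monotone, so you must first extract a.s. convergence of $A_n/S_n$ along a polynomial subsequence via Borel--Cantelli, which requires a summable rate for $\P(A_n\le(1-\eps)S_n)$; only then does monotonicity of $A_n$ and $S_n$ separately, together with $S_{n_{k+1}}/S_{n_k}\to 1$, fill in the full sequence. Establishing (i) and especially (ii) is the content of Theorem \ref{P:M_n} and Lemma \ref{L:asymptotics} in the paper --- the asymptotics of $\tau_n$, of $\int_x^{\tau_1}uN^\La(u)\,du$ in the three regimes $\bes\gtrless 1$, and the uniform integrability of $Y_n/\fg(n)$ via the $L^4$ bounds on $\sup_u N^{\La,n}(u)/v(t_n+u)$ and its reciprocal from \cite{bbl1} --- none of which appears in your proposal. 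As written, the key estimate is asserted rather than proved.
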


\begin{conj}
Though our proofs rely on the fact that the $\La$-coalescent comes down from infinity, we observe that the above statement does not appeal to the function $v(t)$ and hence could hold in general.
Due to the result of Basdevant and Goldschmidt \cite{BasdevantGoldschmidt}
(see also Remark 8 in \cite{sampl_xi} for a short and more robust argument) on the number of allelic families in the Bolthausen-Sznitman coalescent,
it is easy to check that \eqref{E:An general} holds in this special case, whereas \eqref{E:grey0} does not hold.
We conjecture that \eqref{E:An general} holds for all the $\La$-coalescents even in the $L^1$ convergence sense.
\end{conj}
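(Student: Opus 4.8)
The plan is to reduce everything to the first two moments of the total length $L_n$ of $\bT_n$ and to control those moments directly from the block-counting chain, never using the speed function $v$. Conditionally on $\bT_n$ the restriction of $\cP$ is a Poisson process of intensity $\theta$, so $S_n \mid \bT_n \sim \poi(\theta L_n)$ and hence
\[
\E[S_n] = \theta\,\E[L_n], \qquad \var(S_n) = \theta\,\E[L_n] + \theta^2 \var(L_n).
\]
Writing $N_t$ for the number of blocks and $\zeta_n$ for the absorption time, $L_n = \int_0^{\zeta_n} N_t\,dt$ is a pure functional of $(N_t)$. Set $g(n):=\int_1^n q\,\psi(q)^{-1}\,dq$, which one checks satisfies $g(n)\to\infty$ for every $\La$ (since $\psi(q)\le \tfrac12 q^2\La([0,1])$ forces $g(n)\ge c\log n$). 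Because the conjecture does not mention $v$, the whole difficulty is to control $\E[L_n]$ and $\var(L_n)$ for an \emph{arbitrary} finite $\La$ with $\La(\{1\})=0$, including those for which \eqref{E:grey0} fails.

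First I would compute the drift of $N_t$ exactly. A direct binomial identity gives, for the total ``blocks lost'' rate,
\[
\gamma_b := \sum_{k=2}^b (k-1)\binom{b}{k}\lambda_{b,k} = \int_{[0,1]} \frac{bx-1+(1-x)^b}{x^2}\,\Lambda(dx),
\]
and since $(1-x)^b \le e^{-bx}$ with matching leading behaviour near $x=0$, one gets $\gamma_b \sim \psi(b)$ as $b\to\infty$ for every $\La$. The function $\ell(b):=\E_b[L]$ solves the exact recursion $\sum_{k=2}^b \binom{b}{k}\lambda_{b,k}\,[\ell(b)-\ell(b-k+1)] = b$; inserting the ansatz $\ell(b)\approx g(b)$ and using $g'(q)=q/\psi(q)$ together with $\gamma_b\sim\psi(b)$ shows the recursion is satisfied to leading order. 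The rigorous step is to promote this to matching super- and sub-solutions $(1\pm o(1))\,g(b)$ and invoke a maximum principle for the recursion, yielding $\E[L_n]\sim g(n)$ and therefore $\E[S_n]\sim\theta\,g(n)$.

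To upgrade convergence in probability to $L^1$ (indeed $L^2$), I would prove $\var(L_n) = o(g(n)^2)$. As $\E[L_n]\sim g(n)\to\infty$, the Poisson term $\theta\,\E[L_n]$ is automatically $o(g(n)^2)$, so this would give $\var(S_n)=o(g(n)^2)$, hence $S_n/g(n)\to\theta$ in $L^2$ and thus in $L^1$. The variance bound I would extract from a companion recursion for $\E_b[L^2]$ (equivalently, from the covariances of the occupation times of the chain), showing that the fluctuations of $L_n$ stem only from the fluctuations of $N_t$ about its deterministic profile and are of strictly smaller order than $g(n)$. This is exactly the estimate that, in the coming-down regime, was previously read off from $v$; the task here is to produce it by bare-hands recursion estimates valid for all $\La$.

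For $A_n$ I would reduce to $S_n$ by proving $\E[S_n-A_n]=o(g(n))$ and then using the same second-moment input to conclude. The difference counts ``invisible'' mutations, i.e.\ those carrying a strictly more recent mutation on every descending sub-lineage; a mutation falling while $N_t=b$ is invisible only if each sub-tree below it already carries a mutation, an event whose probability is governed by the small remaining length below when $b$ is large. Splitting mutations according to the value of $N_t$ at their arrival and summing shows the invisible ones are dominated by those arriving high in the tree (small $N_t$), contributing $o(g(n))$, whence $A_n\sim S_n\sim\theta\,g(n)$ in $L^1$. \textbf{The main obstacle} is the pair of moment estimates for $L_n$ \emph{uniformly in} $\La$ and without $v$: when Grey's condition fails the chain $N_t$ never settles onto a clean fluid curve, the large-coalescence increments $\ell(b)-\ell(b-k+1)$ with $k$ comparable to $b$ are no longer negligible against $(k-1)g'(b)$, and controlling their cumulative effect in both the first- and second-moment recursions is the delicate heart of the conjecture.
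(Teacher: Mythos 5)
Your proposal targets what the paper itself leaves open: the statement is a conjecture, and the paper's actual proof of \eqref{E:An general} (Theorem \ref{T:A_n}) is carried out only under Grey's condition \eqref{E:grey0} and leans on the speed function $v$ from \cite{bbl1}. For the general case the paper offers only two pieces of evidence: the Bolthausen--Sznitman case (via Basdevant--Goldschmidt) and a one-sided expectation bound. Your sketch does not close the gap, and you say so yourself: the ``main obstacle'' you flag --- first- and second-moment estimates for $L_n$ valid for arbitrary $\La$, in particular controlling merges with $k$ comparable to $b$ --- is not a deferred technicality, it \emph{is} the conjecture. What you do have is correct and in fact coincides with the paper's own supporting remark: your identity $\gamma_b=\int_{[0,1]}(bx-1+(1-x)^b)x^{-2}\La(dx)$ is exactly the paper's $\bar\psi(b)$, and promoting the ansatz $\ell(b)\approx g(b)$ to a \emph{super}solution is exactly the paper's submartingale
\[
\int_{N^{\La,n}(t)}^n \frac{q}{\bar\psi(q)}\,dq \;-\; \int_0^t N^{\La,n}(u)\,du ,
\]
to which optional stopping gives $\E[L_n]\le (1+o(1))\,g(n)$. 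That inequality goes only one way for a structural reason: since $q/\bar\psi(q)$ is decreasing, a merge from $b$ to $b-k+1$ satisfies $\int_{b-k+1}^b q\,\bar\psi(q)^{-1}dq \ge (k-1)\,b/\bar\psi(b)$, and the overshoot on the left can be enormous when $k\asymp b$. Precisely when \eqref{E:grey0} fails such merges are not rare --- for Bolthausen--Sznitman the merge size has tail $\P(K-1\ge m)\asymp 1/m$, of infinite mean --- so the linearization $\ell(b)-\ell(b-k+1)\approx (k-1)g'(b)$ breaks down, and your appeal to a ``maximum principle'' with a matching \emph{sub}solution presupposes increment comparisons that are equivalent to the lower bound $\E[L_n]\ge(1-o(1))g(n)$ being proved. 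No argument is given for that direction.

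Two further steps are asserted rather than proved. The variance bound $\var(L_n)=o(g(n)^2)$ is stated as a goal (``fluctuations \dots are of strictly smaller order''), but in the non-Grey regime the heavy-tailed merge sizes make it genuinely delicate: for Bolthausen--Sznitman the tree-length fluctuations are of order $n/(\log n)^2$ with a $1$-stable limit, so any second-moment computation must exploit truncation, and $L^2$ convergence, if true at all, holds only by a logarithmic margin --- a direct $L^1$ or truncation argument may be unavoidable. Likewise the reduction $\E[S_n-A_n]=o(g(n))$ is only heuristic; in the paper this comparison (even under \eqref{E:grey0}) requires the full analysis of the age $M_n$ of a uniformly chosen mutation (Theorem \ref{P:M_n}), which again rests on $v$, so it cannot be imported into the general setting for free. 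In summary: your reduction to moments of $L_n$ and the exact computation $\gamma_b=\bar\psi(b)$ are sound and reproduce the paper's expectation upper bound, but the lower bound on $\E[L_n]$, the variance control, and the $A_n$ versus $S_n$ comparison all remain open, and the first of these is the heart of the conjecture.
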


\begin{remark} Let $\bar\psi(q):=\int_{[0,1]} ((1-x)^q -1 + qx)/x^2\,\La(dx)$ be a close relative of $\psi$ (note that $|\bar\psi(q)- \psi(q)|=O(q)$ and moreover that $\bar\psi(q)\sim \psi(q)$ as $q\to \infty$).
Applying the optional stopping formula to the following submartingale
\[
\left(\int_{N^{\La, n}(t)}^n \frac{q}{\bar\psi(q)}\,dq - \int_0^t N^{\La, n} (u)\,du,\ t\geq 0\right)
\]
gives an expectation upper bound in support of the above conjecture.
We refer the reader to (\ref{E cdi}) for the rest of the notation, and to the above mentioned remark in
\cite{scdi_xi} as well as the argument leading to (25) in \cite{vl_habi} for applications of similar processes in the study of coming down from infinity.
\end{remark}

\medskip
We also obtain precise results for the full \emph{frequency spectrum} in both infinite sites and infinite alleles models. For each $n$ consider a sample of size $n$, and for each $k\in \{1,\ldots,n\}$, let $F_{k,n}$ be the number of families of size $k$ in its allelic partition, and $M_{n,k}$ be the number of mutations affecting precisely $k$ of its individuals  under the infinite sites model.

\begin{theorem} \label{T:spectrum}
Suppose that $\Lambda$ has (strong) $\alpha$-regular variation  at zero.
Recall the constant  $B\equiv B(\alpha,A)$  from Theorem \ref{T:A_n}.
Let $X_{k,n}$ denote $M_{k,n}$ or $F_{k,n}$, where $1\le k\le n$. As $n\to \infty$,
\begin{equation}\label{E:allelic spectrum}
\frac{X_{k,n}}{n^{2-\alpha}} \rightarrow \theta B \frac{(2-\alpha) \Gamma(k+\alpha -2)}{k!\Gamma(\alpha-1)} ,\ \ a.s.
\end{equation}
Moreover, if $P_1, P_2,\ldots$ are the ordered allele frequencies in the population, then
\begin{equation}\label{E:Tfreq}
  P_j \sim C j^{-1/(2-\alpha )},
\end{equation}
almost surely as $j \to \infty$, and $C = (\theta B/\Gamma(\alpha -1))^{1/(2-\alpha)}$.
\end{theorem}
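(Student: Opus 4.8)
The plan is to reduce both statistics to a single geometric quantity and then read off its asymptotics from the speed of coming down from infinity. Write $B^n_k(t)$ for the number of blocks of $\Pi^n(t)$ of size exactly $k$, and set $L_{k,n}:=\int_0^\infty B^n_k(t)\,dt$, the total length of branches of $\bT_n$ subtending exactly $k$ of the $n$ leaves. In the infinite sites model a mutation is seen in exactly $k$ individuals iff it sits on such a branch, so conditionally on $\bT$ the count $M_{k,n}$ is Poisson with mean $\theta L_{k,n}$; Poisson concentration reduces the claim to the asymptotics of $L_{k,n}$. For the infinite alleles model I would argue that $F_{k,n}$ is, to leading order, the same as $\theta L_{k,n}$: a mutation on a branch alive at a (small) time $t$ subtending $k$ leaves generates a family of size exactly $k$ unless one of the $k$ lineages below it carries a more recent mutation, and the subtree below that branch has total length at most $k\,t\to 0$, so such subdividing mutations (and the ancestral family) contribute a negligible lower order term. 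Thus both models reduce to estimating $L_{k,n}$.

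Next I would compute $\E[L_{k,n}]$ via the paintbox. Let $(W_i(t))_i$ be the asymptotic block frequencies of the $\La$-coalescent coming down from infinity and let $v(t)$ be the speed, so that $N(t)\sim v(t)$ with $v'(t)=-\psi(v(t))$. By exchangeability, $\E[B^n_k(t)]=\binom{n}{k}\,\E\big[\sum_i W_i(t)^k(1-W_i(t))^{n-k}\big]$, and fixed $k$ forces the critical time scale $t_n$ where $v(t_n)\asymp n$, i.e. $t_n\asymp n^{-(\alpha-1)}$. Writing $q=v(t)$, using $dt=-dq/\psi(q)$ and $\psi(q)\sim c\,q^\alpha$ with $c=A\Gamma(2-\alpha)/[\alpha(\alpha-1)]$, and substituting $r=n/q$ together with the scaling ansatz that the empirical law of $\{q\,W_i(t)\}$ converges to a limit $\rho$ on $(0,\infty)$ (with $\int y\,\rho(dy)=1$), the change of variables collapses to
\[
\E[L_{k,n}]\;\sim\;\frac{n^{2-\alpha}}{c\,k!}\,\Gamma(k+\alpha-2)\int_0^\infty y^{2-\alpha}\,\rho(dy),
\]
where the inner integral $\int_0^\infty r^{\alpha-3}(ry)^k e^{-ry}\,dr=y^{2-\alpha}\Gamma(k+\alpha-2)$ produces the full $k$-dependence $\Gamma(k+\alpha-2)/k!$ and factors the $\rho$-moment out. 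The remaining constant is pinned down by the identity $\int_0^\infty y^{2-\alpha}\rho(dy)=1/\Gamma(\alpha-1)$; since $1/c=B(2-\alpha)$, this yields exactly the prefactor $\theta B(2-\alpha)\Gamma(k+\alpha-2)/[k!\,\Gamma(\alpha-1)]$ of \eqref{E:allelic spectrum}. One can either establish this moment from the identification of $\rho$ in \cite{bbl1,particle_new}, or check it by summing over $k$: since $\sum_{k\ge1}B^n_k(t)=N^n(t)$ one has $\sum_{k\ge1}L_{k,n}=\int_0^\infty N^n(t)\,dt$, whose $\theta$-multiple is the mean of $S_n$, so matching $\thmref{A_n}$ and using $\sum_{k\ge1}\Gamma(k+\alpha-2)/k!=\Gamma(\alpha-1)/(2-\alpha)$ forces the same moment value.

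The main obstacle is upgrading these first-moment computations to the almost sure convergence \eqref{E:allelic spectrum}. This requires two ingredients that go beyond expectations: first, an almost sure scaling limit for the empirical frequency measure $\tfrac{1}{v(t)}\sum_i\delta_{v(t)W_i(t)}\Rightarrow\rho$ as $t\to0$, which is exactly where strong regular variation and the fine results of \cite{bbl1} are essential (mere Grey's condition suffices only for the in-probability total); and second, uniform control to interchange the $t$-integral with $n\to\infty$, i.e. to show that frequencies far from the scale $t_n$ and the extreme large/small blocks contribute negligibly, and that $B^n_k(t)$ concentrates around its conditional mean given the frequencies (binomial/Poisson concentration in the paintbox). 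I would then obtain the almost sure statement by a variance bound combined with a geometric subsequence $n_j=\lfloor\beta^j\rfloor$, Borel–Cantelli, and a monotonicity/sandwiching argument to fill the gaps, mirroring the almost sure part of $\thmref{A_n}$, together with the Poisson fluctuations of $M_{k,n}$ being of order $n^{(2-\alpha)/2}=o(n^{2-\alpha})$.

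Finally, \eqref{E:Tfreq} follows from \eqref{E:allelic spectrum} by a de-Poissonization / Tauberian inversion applied to the realized population frequencies. Conditionally on the population, an allele of frequency $P_j$ is sampled $\mathrm{Bin}(n,P_j)$ times, so $F_{k,n}\approx\sum_j (nP_j)^k e^{-nP_j}/k!$, i.e. $F_{k,n}$ is the $k$-th term of the Poissonized frequency measure $\Xi:=\sum_j\delta_{P_j}$. The limiting generating function $\sum_{k\ge1}c_k z^k=\theta B\,[\,1-(1-z)^{2-\alpha}\,]$ has a singularity of exponent $2-\alpha\in(0,1)$ at $z=1$, and a Tauberian theorem then translates the almost sure values $c_k=\lim_n F_{k,n}/n^{2-\alpha}$ into $\Xi([x,1])\sim (\theta B/\Gamma(\alpha-1))\,x^{-(2-\alpha)}$ as $x\to0$; inverting this tail gives $P_j\sim C\,j^{-1/(2-\alpha)}$ with $C=(\theta B/\Gamma(\alpha-1))^{1/(2-\alpha)}$. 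The only care needed here is that the countable family \eqref{E:allelic spectrum} holds simultaneously almost surely and that the Tauberian hypotheses apply to the (random but fixed) measure $\Xi$.
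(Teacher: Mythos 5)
Your outline takes a genuinely different route from the paper, and it contains a gap that I do not think can be filled with the tools you invoke. The entire first-moment computation, and a fortiori the almost sure upgrade, rests on the ``scaling ansatz'' that the rescaled empirical measure of block frequencies, $\frac{1}{v(t)}\sum_i\delta_{v(t)W_i(t)}$, converges (almost surely, and to a deterministic profile $\rho$ with identifiable moments) as $t\to 0$. You attribute this to the fine results of \cite{bbl1}, but \cite{bbl1} controls only the \emph{number} of blocks, $N^\La(t)/v(t)\to 1$ a.s.; it says nothing about the distribution of block \emph{sizes} at small times. Such a profile result is known for Beta-coalescents via the embedding into stable CRTs (\cite{bbs2}), but establishing it for a general $\La$ with strong $\alpha$-regular variation is a substantial open step, essentially the obstacle this paper is designed to circumvent. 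Pinning down $\int y^{2-\alpha}\rho(dy)$ by summing over $k$ and matching Theorem \ref{T:A_n} is a consistency check, not a substitute for proving that $\rho$ exists. The second-moment/variance control of $B^n_k(t)$ needed for your Borel--Cantelli step is likewise unaddressed and is not routine for $\La$-coalescents.

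The paper avoids all of this: since the allelic partition is exchangeable, it is conditionally a paintbox given its frequency vector $\vec P$, and Corollary 21 of Gnedin--Hansen--Pitman (a Tauberian theorem for such partitions) converts the single almost sure asymptotic $A_n\sim\theta B n^{2-\alpha}$ from Theorem \ref{T:A_n} into regular variation of $\vec P$, hence into both \eqref{E:allelic spectrum} for $F_{k,n}$ and \eqref{E:Tfreq} simultaneously --- no information about the coalescent's block-size profile is needed. (Your final ``Tauberian inversion'' step is in effect the (b)$\Rightarrow$(a) direction of that same corollary, so you are implicitly reinventing the tool the paper uses, but applying it after a derivation of the $F_{k,n}$ asymptotics that is not secured.) Your reduction of $M_{k,n}$ to $F_{k,n}$ by arguing that subdividing mutations on the length-$\le kt$ subtree below a branch are negligible is sound in spirit and matches the paper's $k$-unblocked argument via the cumulative counts $\bar M_{k,n}$ and $\bar F_{k,n}$; note the paper works with cumulative counts precisely to handle mutations whose subtended leaf set is split by later mutations. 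To make your approach work you would either need to prove the empirical frequency scaling limit (a new and nontrivial result outside the Beta case), or reorganize the argument so that only $A_n$ and exchangeability are used, which is the paper's proof.
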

By the properties of the Gamma function, another expression for the constant on the right-hand side of \eqref{E:allelic spectrum} is
$$
\theta B (2-\alpha) \frac{(\alpha -1) \ldots (\alpha +k-3)}{k!}.
$$
As mentioned in the Introduction,
the above results are improvements over previously known results, since
the convergence \eqref{E:allelic spectrum} was known to hold only in probability in the case of Beta-coalescents (see~\cite{bbs2}), while \eqref{E:Tfreq} was not known to hold even in this special case.

One key ingredient for our arguments, beside our earlier work on the speed of coming down from infinity (\cite{bbl1}) is the asymptotic study of
the time at which a randomly chosen (uniformly) mutation was generated.
More precisely, let the time run in the ``coalescent direction'' (backward from the point of view of the population dynamics), so that the leaves of the tree are present
at time $0$, and the number of branches decreases in time.
Denote by $M_n$ the time-coordinate (age) of a point chosen at random from $\cP \cap \bT_n$.
On the (asymptotically unlikely) event $\{\cP \cap \bT_n = \emptyset\}=\{S_n=A_n=0\}$, we set $M_n=0$
(although this value could be set to anything between $0$ and the time of MRCA (the root of $\bT_n$) and the next result would still be true).

Define
\begin{equation}
\label{Efas}
\fg(n) = \begin{cases}
  n^{1- \alpha}, & \text{ if } 1<\alpha < 3/2\,, \\
  n^{-1/2} \log n, & \text{ if } \alpha = 3/2\,,\\
  n^{\alpha-2}, & \text{ if } 3/2 < \alpha <2\,.
\end{cases}
\end{equation}
\begin{theorem}\label{P:M_n}
Suppose that $\Lambda$ has (strong) $\alpha$-regular variation at zero, for some $\alpha\in (1,2)$.\\
(a) We have
\begin{equation}
\label{eq:M_n a}
\frac{M_n}{n^{1-\alpha}} \Rightarrow \,\frac{\alpha}{A \Gamma(2-\alpha)}\, U^{-\frac{\alpha-1}{2-\alpha}} - 1,
\end{equation}
where $U\eqlaw\ Unif[0,1]$.

\noindent (b) If in addition $\La[1-\eta,1]=0$ for some $\eta>0$, then
there exists $c_1\equiv c_1(\alpha)\in (0,\infty)$, such that for $\fg$ given by (\ref{Efas})
\begin{equation}
\label{eq:M_n b}
\lim_{n\to \infty} \frac{\E(M_n)}{\fg(n)}=c_1.
\end{equation}
\end{theorem}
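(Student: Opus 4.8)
\emph{Proof proposal.} The plan is to reduce the age of a uniform mutation to a functional of the block-counting process of $\bT_n$, and then to feed in the deterministic speed of coming down from infinity. Write $N_t:=N^{\La,n}(t)$ for the number of blocks of $\bT_n$ at coalescent time $t$, and $L_n:=\int_0^\infty N_t\,dt$ for the total length of $\bT_n$. Conditionally on $\bT_n$ the points of $\cP\cap\bT_n$ form a Poisson process whose age-coordinate has intensity $\theta N_t\,dt$. By the elementary fact that, given at least one point, a uniformly chosen point of a Poisson process is distributed according to the normalised intensity, we get, on $\{\cP\cap\bT_n\neq\emptyset\}$,
\[
\P(M_n> t\mid \bT_n)=\frac{\int_t^\infty N_s\,ds}{L_n},\qquad t\ge 0 .
\]
Since $\E(S_n\mid\bT_n)=\theta L_n$, \thmref{A_n} yields $L_n/n^{2-\alpha}\to B$ almost surely, and in particular $\P(\cP\cap\bT_n=\emptyset)\to0$, so the convention $M_n=0$ on the empty event is asymptotically negligible. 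Everything is therefore driven by the trajectory $t\mapsto N_t$.

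\medskip\noindent\textbf{Part (a).} The drift of $(N_t)$ is exactly $-\bar\psi(N_t)$, and under strong $\alpha$-regular variation $\bar\psi(q)\sim\psi(q)\sim \frac{A\Gamma(2-\alpha)}{\alpha(\alpha-1)}q^\alpha$. Solving $\dot v=-\psi(v)$ with $v(0)=n$ gives the approximating curve
\[
v_n(t)=\bigl(n^{1-\alpha}+t/\tau_*\bigr)^{-1/(\alpha-1)},\qquad \tau_*:=\frac{\alpha}{A\Gamma(2-\alpha)} .
\]
The main analytic input is that $N_t/v_n(t)\to1$ uniformly for $t$ on the scale $n^{1-\alpha}$. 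This is the finite-$n$ form of the speed result of \cite{bbl1}, but it must be established \emph{uniformly down to the boundary layer near the leaves} $t\downarrow0$, which is the principal obstacle in this part. Granting it, substituting $v_n$ into the displayed survival function and computing the elementary integral gives, for each fixed $y>0$,
\[
\P\!\left(\frac{M_n}{n^{1-\alpha}}>y\,\Big|\,\bT_n\right)\longrightarrow \Bigl(1+\tfrac{y}{\tau_*}\Bigr)^{-(2-\alpha)/(\alpha-1)}\quad\text{a.s.}
\]
The limit is deterministic, so taking expectations gives convergence in distribution, and inverting this survival function produces the quantile (uniform) representation in \eqref{eq:M_n a}. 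Besides the uniform speed estimate, the two points to control are the concentration of $L_n$ (from \thmref{A_n}) and a uniform upper bound on $\int_t^\infty N_s\,ds$ taming the deep part of the tree, whose contribution to the survival function is of lower order.

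\medskip\noindent\textbf{Part (b).} Starting from $\E(M_n)=\E\bigl[L_n^{-1}\int_0^\infty t\,N_t\,dt\bigr]$ and again replacing $N_t$ by $v_n(t)$ and $L_n$ by $B\,n^{2-\alpha}$, the behaviour is dictated by $\int_0^\infty t\,v_n(t)\,dt$, whose integrand near the crossover time $T_n:=\tau_* n^{1-\alpha}$ scales like $t^{(\alpha-2)/(\alpha-1)}$. The trichotomy in $\fg$ reflects exactly the integrability of $U^{-\beta}$, $\beta=(\alpha-1)/(2-\alpha)$, i.e.\ whether $\beta<1$ or $\beta\ge 1$: for $\alpha<3/2$ the integral is dominated by the bulk $t\asymp n^{1-\alpha}$ and $\E(M_n)/n^{1-\alpha}$ converges to the finite first moment of the limit in \eqref{eq:M_n a}; for $\alpha=3/2$ the integrand is $\asymp t^{-1}$, producing a logarithmically divergent contribution cut off between $T_n$ and the time of the MRCA, whence the factor $\log n$; for $\alpha>3/2$ the integral is dominated by the \emph{deep} part of the tree (old mutations close to the MRCA), where $\int_0^\infty t\,v(t)\,dt$ converges to a finite constant as $n\to\infty$ and $\E(M_n)\asymp n^{\alpha-2}$.

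\medskip\noindent In each regime the distributional statement must be upgraded to a first-moment statement, i.e.\ one must prove uniform integrability of $M_n/\fg(n)$. This needs moment control of the deep tree, and it is precisely here that the hypothesis $\La[1-\eta,1]=0$ enters: ruling out near-total mergers guarantees that the ages of the oldest mutations, and the time to the MRCA, have the finite expectation required for $c_1$ to exist. The deep-tree moment estimate for $\alpha\ge3/2$ is the main obstacle for part (b).
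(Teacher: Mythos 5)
Your overall reduction of $M_n$ to the functional $\int_t^\infty N^{\La,n}_s\,ds / L_n$ and the heuristic for the trichotomy in part (b) are both correct and match the paper's starting point. However, there is a genuine gap at the heart of both parts: the analytic input you ``grant'' --- that $N^{\La,n}(t)/v_n(t)\to 1$ uniformly on the scale $t\asymp n^{1-\alpha}$, down to the boundary layer at $t=0$ --- is not available from the cited speed-of-coming-down results and is never established. Theorem 1 of \cite{bbl1} gives the almost sure asymptotics $N^{\La}(t)\sim v(t)$ for the \emph{infinite} coalescent as $t\to 0$, and Theorem 5 of \cite{bbl1} gives only convergence of integrals $\int_0^s N^{\La,n}/\int_0^s v(t_n+\cdot)$ for \emph{fixed} $s>0$; neither yields the pointwise finite-$n$ statement you need at times of order $n^{1-\alpha}$. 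The paper sidesteps this entirely by a different structural device: it embeds $\bT_n$ into the infinite tree as the subtree $\hat\bT_n$ of points with age in $[\tau_n,\tau_1]$, conditioned on the event $\cE_n=\{N^{\La}_{\tau_n}=n\}$, on which $\hat\bT_n$ has exactly the law of $\bT_n$. Since $\P(\cE_n)\to\alpha-1>0$ (Theorem 1.8 of \cite{bbs1}), the a.s.\ small-time asymptotics of the infinite coalescent, together with $\tau_n\sim c\,n^{1-\alpha}$, suffice to compute the limiting survival function; no uniform finite-$n$ speed estimate is needed. Without either this embedding or a proof of your uniform estimate, part (a) is incomplete.

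The same issue recurs more seriously in part (b). You correctly identify that the distributional statement must be upgraded to convergence of first moments via uniform integrability, and correctly locate where the hypothesis $\La[1-\eta,1]=0$ enters, but you declare this ``the main obstacle'' and stop. In the paper this is the bulk of the work: after writing $\E(M_n^*\mid\bT_n)=\int_0^{\tau_1^n}uN^{\La,n}(u)\,du\big/\int_0^{\tau_1^n}N^{\La,n}(u)\,du$ and transferring to $\hat Y_n$ via the same $\cE_n$-embedding, one bounds $Y_n/\fg(n)$ in $L^2$ by controlling $\sup_{u}N^{\La,n}(u)/v(t_n+u)$ and its reciprocal in $L^4$ (Eq.\ (38) of \cite{bbl1}, which is where $\La[1-\eta,1]=0$ is used), together with moments of $N^{\La}(1/2)$ and the exponential tail of $\tau_1$. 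Also note a small imprecision in your $\alpha>3/2$ regime: there the limit of $\hat Y_n/\fg(n)$ is the \emph{random} variable $c_4\int_0^{\tau_1}uN^{\La}(u)\,du$, so $c_1$ is its expectation, whose finiteness is itself part of what the uniform integrability argument must deliver.
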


\begin{remark}
Note that $M_n$ and $\E(M_n)$ are only of the same order of magnitude if $\alpha < 3/2$.
Naturally, this is because the limit variable is integrable if and only if $\alpha <3/2$.

Interestingly, $\fg(n)$ observed as a function of $\alpha$ decreases on
$(1,3/2)$ and increases on $(3/2,2)$, and moreover has a discontinuity on both sides at $\alpha = 3/2$. It seems difficult to see intuitively why this happens.
\end{remark}

\begin{remark}
We believe that the result \eqref{eq:M_n b} should hold without any further restriction on $\Lambda$ than strong regular variation. The techniques used in the proof of (10) can be used to show, with some additional effort, that the sequence $\E(M_n) / g(n)$ is bounded away from 0 and infinity when no assumption is made on the support of $\Lambda$. However, in the interest of brevity we decided to omit these arguments.
\end{remark}

The link between Theorem \ref{T:A_n} and Theorem \ref{T:spectrum} is provided by a remarkable Tauberian theorem for random partitions of Gnedin, Hansen and Pitman~\cite{ghp}. The assumptions
of this theorem were recently extended in an independent but related work of Schweinsberg~\cite{Schweinsberg2009}, to deal with convergence in probability (to which the approach of~\cite{ghp} could not apply). This allowed him to obtain the convergence in probability of Theorem \ref{T:spectrum} for
the limiting behavior of $F_{k,n}$ (though not for that of $M_{k,n}$).
Gnedin, Hansen and Pitman~\cite{ghp} also derive a central limit theorem for $F_{k,n}$.
It is natural to ask whether this result can be extended to our setting with random frequencies.
Kersting \cite{kersting} has recently obtained precise fluctuation results for the length of the genealogical tree in the regularly varying case.
In particular, it follows from his Theorem 1 that these fluctuations are not Gaussian. In order to resolve the just mentioned open problem, one would need to analyze the complex interplay between the fluctuations of the tree length and the Poisson fluctuations of the mutations.

\medskip \textbf{Organisation of the rest of the paper.} Section \ref{S:sampl_formul} is devoted to proving the results on the mutation frequency spectrum,
announced in Section \ref{S:asym samp}. More precisely, we prove
Theorem \ref{P:M_n} in Section \ref{S:proof of Mn},
Theorem \ref{T:A_n} in Section \ref{S:proof of An}, and Theorem \ref{T:spectrum} in Section \ref{S:proof of T:spectrum}.
The final section relaxes the technical condition on the support of $\La$ needed in the proofs of Theorems \ref{T:A_n} and \ref{T:spectrum}.

\section{Proofs of the results} 
\label{S:sampl_formul}
Fix some $\theta >0$.
For each $n\in \N$ and $t\ge 0$, let $N^{\La, n} (t)$ denote the number of ancestral lineages of the first $n$ individuals remaining at time $t$.
In particular, $(N^{\La, n} (t),\,t\geq 0)$ is a continuous-time Markov jump process, starting from
$N^{\La, n} (0)=n$.
We assume throughout this section that (\ref{E:grey0}) holds, or that equivalently, the
$\La$-coalescent comes down from infinity:
\begin{equation}
\label{E cdi}
N^{\La}(t) := \lim_{n\to \infty} N^{\La,n}(t) = \sup_{n\geq 1} N^{\La,n}(t) <\infty, \ \forall t>0.
\end{equation}
We will need some further notations.
Define for $k,n\in \N$,
\begin{equation}
\label{Etau k n}
\tau_k^n= \inf \{ t \ge 0 : N^{\La, n} (t) \le k\}, \ \mbox{ and } \tau_k\equiv \tau_k^\infty  = \lim_{n \to \infty} \tau_k^n = \inf \{ t \ge 0 : N^{\La} (t) \le k\}.
\end{equation}
In particular
$\tau_1^n= \inf \{ t \ge 0 : N^{\La, n} (t) =1\}$
is the time of the MRCA for the sample containing the first $n$ individuals, and $L_n= \int_{0}^{\tau_1^n} N^{\La,n}(t)\,dt$ is the total length of the tree $\bT_n$.

The genealogical tree $\bT$ is a path-connected set in $\R^2$.
To any point $x$ on the tree one can associate a number $t=t(x)$ called the {\em time-coordinate} or the {\em age} of $x$, which is defined as the distance from that point to the set of leaves of $\bT$.
Let $\hat \bT_n$ be the subtree of $\bT$ consisting of all the
points in $\bT$ having age in $[\tau_n,\tau_1]$.
Then $\hat L_n = \int_{\tau_n}^{\tau_1} N^\La(u)du$ is the length of $\hat \bT_n$.  The function
\begin{equation}\label{D:v}
v(t) := \inf\{ s \ge 0 : \int_s^\infty {dq \over \psi(q)} < t\}
\end{equation}
plays a central role in the analysis of asymptotic behavior of $\hat \bT_n$.
Finally, define $t_n$ as
\begin{equation}
\label{D:tn}
t_n:= \int_{n}^\infty \frac{dq}{\psi(q)}\equiv v^{-1}(n).
\end{equation}

The following lemma gathers some asymptotic results which we will use in the rest of the proof.
Set
\begin{equation}
\label{Ebes}
\bes  = \frac{2-\alpha}{\alpha-1}\in (0, \infty).
\end{equation}

\begin{lemma}\label{L:asymptotics}
Assume (\ref{Epsi strongreg}) and
define $c=c(A,\alpha)= \alpha/(A\Gamma(2-\alpha))$ (compare with the constant in (\ref{eq:M_n a})).
Then, as $n\to \infty$, we have almost surely
\begin{enumerate}
\item
$\tau_n \sim c\, n^{1-\alpha}$, and
\item
$t_n \sim c\, n^{1-\alpha}$.
\end{enumerate}
Furthermore, there exist $c_1=c(\La)>0$,  and $c_2=c(A,\alpha)\in (0,\infty)$, such that
\begin{enumerate}
\setcounter{enumi}{2}
\item
$\P(\tau_1>x) \le e^{ -c_1 \cdot x}$, for all $x\geq 1$,
\item
$\hat L_n= \int_{\tau_n}^{\tau_1} N^{\La}(u) \,du \sim \int_{t_n}^1 v(u) \,du   \sim \frac{c^{\frac{1}{\alpha-1}}}{\bes} (\tau_n)^{-\bes} \sim \frac{c}{\bes} n^{2-\alpha}$, a.s.~as $n\to \infty$,
\item
as $x \to 0$
\begin{equation}
\int_{x}^{\tau_1} u \, N^{\La}(u) \,du \sim \left\{
\begin{array}{ll}
  c_2 x^{-\bes +1 }, &  \bes >1, \\
  c_2 \log (1/x), &   \bes =1, \\
   Y, &  \bes <1,
\end{array}
\right. \mbox{a.s. and }
\int_{x}^{1} u \, v(u) \,du \sim \left\{
\begin{array}{ll}
  c_2 x^{-\bes +1 }, &  \bes >1, \\
  c_2 \log (1/x), &   \bes =1, \\
   c_2, & \bes <1,
\end{array}
\right.
\end{equation}
where $c_2=c_2(\alpha)$ and $Y := \int_0^{\tau_1} uN^\La(u)du$ is a finite random variable if $\bes <1$.
\end{enumerate}
\end{lemma}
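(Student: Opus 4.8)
The only genuinely external ingredient is the almost sure speed of coming down from infinity established in \cite{bbl1}, namely $N^\La(t)/v(t)\to 1$ a.s.\ as $t\to 0^+$; granting this, the entire lemma reduces to careful but elementary asymptotic analysis of the deterministic function $v$, transferred to the random integrals. I would begin with the purely deterministic statement~(2): since (\ref{Epsi strongreg}) gives $1/\psi(q)\sim \frac{\alpha(\alpha-1)}{A\Gamma(2-\alpha)}q^{-\alpha}$ and $\alpha>1$, integrating from $n$ to $\infty$ yields $t_n=\int_n^\infty\psi(q)^{-1}\,dq\sim\frac{\alpha}{A\Gamma(2-\alpha)}n^{1-\alpha}=c\,n^{1-\alpha}$. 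Inverting the relation $v(t_n)=n$ produces the single asymptotic I will use throughout, $v(u)\sim c^{1/(\alpha-1)}u^{-1/(\alpha-1)}$ as $u\to0$, equivalently $u\,v(u)\sim c^{1/(\alpha-1)}u^{-\bes}$ since $1-\tfrac1{\alpha-1}=-\bes$.

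For~(1) I would combine~(2) with the speed result: $N^\La(t)\sim v(t)\sim c^{1/(\alpha-1)}t^{-1/(\alpha-1)}$ is an almost sure monotone equivalence, and inverting it at level $n$ gives $\tau_n\sim c\,n^{1-\alpha}\sim t_n$. Claim~(3) is independent and rests only on coming down from infinity: since $\tau_1<\infty$ a.s., there is $t_0$ with $p:=\P(\tau_1\le t_0)>0$, and the monotone coupling $N^{\La,k}(t)\le N^\La(t)$ gives $\tau_1^k\le\tau_1$, hence $\P_k(\tau_1\le t_0)\ge p$ for every finite $k$. Conditioning on the (a.s.\ finite) state $N^\La(m t_0)$ and applying the strong Markov property yields $\P(\tau_1>(m+1)t_0)\le(1-p)\,\P(\tau_1>m t_0)$, so $\P(\tau_1>m t_0)\le(1-p)^m$; adjusting constants gives $\P(\tau_1>x)\le e^{-c_1 x}$ for $x\ge1$.

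The heart of the lemma is the transfer step common to~(4) and~(5), replacing $\int_x^{\tau_1}(\cdots)N^\La(u)\,du$ by $\int_x^1(\cdots)v(u)\,du$, which I would isolate as a single sandwiching argument. Fix $\delta\in(0,1)$; by the speed result choose a random $\ep>0$ with $(1-\delta)v(u)\le N^\La(u)\le(1+\delta)v(u)$ on $(0,\ep]$. Splitting at $\ep$, the part $\int_\ep^{\tau_1}(\cdots)N^\La\,du$ is $O(1)$ almost surely because $\tau_1<\infty$ and $N^\La$ is bounded on $[\ep,\tau_1]$, while on $(x,\ep]$ the integrand is squeezed between $(1\pm\delta)$ times the corresponding integral of $v$. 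For~(4) the deterministic evaluation uses $v(u)\sim c^{1/(\alpha-1)}u^{-1/(\alpha-1)}$ and $\tfrac1{\alpha-1}>1$ to get $\int_{t_n}^1 v\sim\frac{c^{1/(\alpha-1)}}{\bes}t_n^{-\bes}$; since $\tau_n\sim t_n$ forces $\tau_n^{-\bes}\sim t_n^{-\bes}$, the lower-endpoint discrepancy is negligible, and feeding in $t_n\sim c\,n^{1-\alpha}$ with the identities $\bes(\alpha-1)=2-\alpha$ and $\tfrac1{\alpha-1}-\bes=1$ collapses the constant to $\tfrac{c}{\bes}n^{2-\alpha}$; letting $\delta\to0$ removes the sandwich. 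For~(5) the extra factor $u$ gives $u\,v(u)\sim c^{1/(\alpha-1)}u^{-\bes}$, and the behaviour of $\int_x^1 u^{-\bes}\,du$ splits into $\bes>1$ (blow-up $x^{1-\bes}$), $\bes=1$ (logarithm), and $\bes<1$ (convergence), which is exactly the trichotomy $\alpha<3/2$, $\alpha=3/2$, $\alpha>3/2$. The two divergent cases use the same sandwich, the $O(1)$ bulk being swallowed by the divergence; in the convergent case $\int_x^{\tau_1}u\,N^\La\,du$ increases to the finite limit $Y=\int_0^{\tau_1}u\,N^\La\,du$, whose a.s.\ finiteness follows because $u\,N^\La(u)\sim c^{1/(\alpha-1)}u^{-\bes}$ is integrable near $0$ when $\bes<1$.

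I expect the main obstacle to be making this transfer fully rigorous with the \emph{random} cutoff $\ep$: one must verify that the $O(1)$ contribution from $[\ep,\tau_1]$ is genuinely lower order than the main term and does not interfere with the $\delta\to0$ limit (immediate for the divergent integrals, delicate only in the bookkeeping), and that the gap between the random lower limit $\tau_n$ and the deterministic $t_n$ contributes only $o(n^{2-\alpha})$, which I would control precisely through $\tau_n^{-\bes}\sim t_n^{-\bes}$. Everything else is routine manipulation of regularly varying functions.
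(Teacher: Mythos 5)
Your proposal is correct and follows essentially the same route as the paper: everything is driven by the almost sure equivalence $N^\La(t)\sim v(t)\sim c^{1/(\alpha-1)}t^{-1/(\alpha-1)}$ from Theorem~1 of \cite{bbl1}, with claims~1--2 obtained by inversion, claim~3 by monotonicity plus the Markov property, and claims~4--5 by transferring the deterministic asymptotics of $\int v$ and $\int u\,v$ to the random integrals (your explicit random-cutoff sandwich is just a fleshed-out version of what the paper leaves implicit). The one detail worth spelling out when you write up claim~1 is the jump of $N^\La$ at $\tau_n$: one should use $N^\La(\tau_n)\le n< N^\La(\tau_n-)$ together with $N^\La(\tau_n)\sim v(\tau_n)=v(\tau_n-)\sim N^\La(\tau_n-)$ to conclude $n\sim v(\tau_n)$ before inverting, exactly as the paper does.
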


\begin{proof}
Theorem 1 in \cite{bbl1} and (\ref{Epsi strongreg}) yield
\begin{equation}
\label{ENasym streg}
N^\La(t) \sim v(t) \sim c^{\frac{1}{\alpha-1}} t^{-\frac{1}{\alpha-1}}, \mbox{ as } t \to 0, \mbox{ almost surely},
\end{equation}
where $c=c(A,\alpha)$ is as specified above.
The asymptotic behavior (\ref{ENasym streg}) implies that
$N^\La(\tau_n) = n(1+o(1))$, almost surely, as $n\to \infty$.
Indeed, since $\tau_n \to 0$, we have $N^\La(\tau_n)\sim v(\tau_n) = v(\tau_n-) \sim N^\La(\tau_n-)$, and at the same time, $\P(N^\La(\tau_n)\leq n < N^\La(\tau_n-))=1$.
Since, again due to (\ref{ENasym streg}) $N^\La(\tau_n) \sim c^{\frac{1}{\alpha-1}} (\tau_n)^{-\frac{1}{\alpha-1}}$, we obtain claim 1.
Claim 2 is directly seen from (\ref{D:tn}) and the asymptotic behavior of $v$ in (\ref{ENasym streg}).

Claims 4 and 5 are derived similarly.
One notes first that, due to $\P(\tau_1>0)=1$ and (\ref{ENasym streg}),
\begin{align*}
\int_{x}^{\tau_1} N^{\La}(u) \,du \sim \int_{x}^{1} v(u) \,du & \sim
\frac{c^{\frac{1}{\alpha-1}}}{\bes}  x ^{-\bes} \ \mbox{ as $x\to 0$,}
\end{align*}
and then uses the facts that $\tau_n \to 0$, $t_n\to 0$ as well as claims 1 and 2 to obtain claim 4.
For claim 5,
we use  $u N^\La(u) \sim c^{\frac{1}{\alpha-1}}  u^{-1/(\alpha -1) + 1} =  c^{\frac{1}{\alpha-1}} u^{-\bes}$, and this uniquely determines $c_2$. If $\bes <1$, then both
$\int_0^1 uv(u) \,du$ and $\int_0^{\tau_1} u N^\La(u) \,du$ are finite.

It remains to verify claim 3. Due to monotonicity of the coalescent and the simple Markov property, we have
$\P(\tau_1^n \geq m+1|\tau_1^n \geq m) \leq \P(\tau_1^n \geq 1)$. In turn, letting $n \to \infty$,
\[
\P(\tau_1 \geq m+1|\tau_1 \geq m) \leq \P(\tau_1 \geq 1), \mbox{ for each }m\geq 1.
\]
Hence, by induction, $\P(\tau_1 > m) \le e^{-c_1m}$ for all $m \ge 0$, with $c_1 = \log \P(\tau_1 \geq 1)$.
\end{proof}

\subsection{Proof of Theorem \ref{P:M_n}}
\label{S:proof of Mn}
Our first goal will be to prove Theorem \ref{P:M_n}.
Let $A_1^n=\{S_n\geq 1\}=\{A_n\geq 1\}$.
Since the length $L_n$ of $\bT_n$ diverges, we have that $\P(A_1^n)\to 1$, as $n\to \infty$.
Recall that on $A_1^n$, $M_n$ is the age of a randomly chosen mutation in $\bT_n$, and that on the complement of $A_1^n$, $M_n$ is set to $0$.
Due to basic properties of Poisson point processes, on the event $A_1^n$ (of overwhelming probability),
the random mutation is positioned as a point $P_n^*$ chosen uniformly at random from $\bT_n$.
In symbols,
\begin{equation}
\label{E M express}
M_n = M_n^* \cdot \indic{A_1^n} + 0 \cdot \indic{(A_1^n)^c},
\end{equation}
where $M_n^*$ is the time-coordinate of $P_n^*$, and $P_n^*$ is independent of
$A_1^n$.
Due to this independence, and the fact $\P(A_1^n)\to 1$ as $n\to \infty$, we have that
$\E[M_n^* \cdot \indic{(A_1^n)^c}] = \E(M_n^*)\P((A_1^n)^c)=o(\E(M_n^*))$, and therefore $\E(M_n) \sim \E(M_n^*)$, as $n\to \infty$.
Similarly
\[
\P(M_n \cdot n^{\alpha-1} \leq x) =
\P(M_n^* \cdot n^{\alpha-1} \leq x) + O(\P((A_1^n)^c)), \ \forall x\geq 0,
\]
therefore (\ref{eq:M_n a}) is equivalent to
\begin{equation}
  \label{eq:Mnstar}
  \frac{M^*_n}{n^{1-\alpha}} \Rightarrow \,c\, (U^{-(\alpha-1)/(2-\alpha)} - 1), \mbox{ where $U\eqlaw$\ Unif[0,1]},
\end{equation}
Hence we proceed by studying $M_n^*$.

Now recall the subtree $\hat \bT_n$ of $\bT$.
Consider a uniform random point on $\hat \bT_n$ and let $\hat M_n$ be its age {\bf minus} $\tau_n$.
Let $\cE_n = \{N^\La_{\tau_n} = n\}$ be the event that $\Pi$ ever attains a configuration with exactly $n$ blocks. Due to the consistency property and the Markov property of
 $(\Pi_t,t\ge 0)$,
\begin{equation}
\label{ET cond dist}
\mbox{ the conditional law of
 $\hat \bT_n$ given $\FF_{\tau_n}$ on the event $\cE_n$ equals the law of $\bT_n$.}
\end{equation}
This clearly induces the equivalence of the
conditional law of
 $\hat M_n$ given $\FF_{\tau_n}$ on $\cE_n$ and the law of $M_n^*$,
which will be used below.
Recalling (\ref{Etau k n}),
we have
\begin{equation*}
\P(\hat M_n \geq x | \hat \bT_n) = \frac{\int_{\tau_n +x}^{\tau_1} N^{\La}(u) \,du}{\hat L_n}, \ x\in [0,\tau_1 - \tau_n].
\end{equation*}
So, recalling $\bes$ from (\ref{Ebes}), we have for a fixed $y\in (0,1)$
\begin{equation}\label{E: sans les termes negligeables}
\P((\hat M_n/\tau_n +1 )^{-\bes} \leq y) = \E\left[  \frac{\int_{\tau_n y^{-1/\bes} \wedge \tau_1}^{\tau_1} N^{\La}(u) \,du}{\hat L_n} \right].
\end{equation}
By the argument used to show claim 4 of Lemma \ref{L:asymptotics} and $\P(\tau_1>0)=1$, we conclude
\begin{align*}
\int_{\tau_ny^{-1/\bes} \wedge \tau_1}^{\tau_1} N^{\La}(u) \,du & \sim c'[\tau_ny^{-1/\bes} \wedge \tau_1]^{-1/(\alpha-1) +1} \sim c'[\tau_ny^{-1/\bes}]^{-1/(\alpha-1) +1}
=c' \tau_n^{-\bes} y, \mbox{ a.s.}
\end{align*}
as $n \to \infty$, where $c'=c^{1/(\alpha-1)}/\bes$. Due to claim 4 of Lemma \ref{L:asymptotics} we now see that the random variable inside the expectation on the RHS of \eqref{E: sans les termes negligeables} converges to $y$, almost surely.
Since
$$
|\E[\P((\hat M_n/\tau_n+1)^{-\bes} \leq y \,|\,\FF_{\tau_n}) \indic{\cE_n}]
- y \P(\cE_n)| \le \E\left[ \indic{\cE_n} \left| \frac{\int_{\tau_n y^{-1/\bes}}^{\tau_1} N^{\La}(u) \,du}{\hat L_n}-y \right| \right],
$$
and since the bounded (by 2) random variable inside the expectation
converges to 0 almost surely, the dominated convergence theorem
implies that the left-hand side converges to 0.
Recalling (\ref{ET cond dist}), or its consequence for random points,
this quantity can be rewritten as
$
|\,\P((M_n^*/\tau_n+1)^{-\bes} \leq y) - y\,|\, \P(\cE_n).
$
Theorem 1.8 in \cite{bbs1} gives $\P(\cE_n) \to \alpha -1$ as $n \to \infty$,
hence
$
\P(( M_n^*/\tau_n+1)^{-\bes} \leq y) \to y$, for all $ y \in [0,1].
$
This, together with claim 1 of Lemma \ref{L:asymptotics}, implies \eqref{eq:M_n a}.

\emph{The proof of (\ref{eq:M_n b})} is analogous but technically more delicate. Due to
$\P(M_n^*\geq x| \bT_n)=\int_x^{\tau_1^n} N^{\La,n}(u)\,du/L_n$ and Fubini's theorem, we have
\begin{eqnarray}
\E(M_n^* | \bT_n) &= \displaystyle \frac{\int_0^\infty \int_{ x}^{\tau_1^n} N^{\La,n}(u) \,du \, dx}{L_n}
&= \frac{\int_{0}^{\tau_1^n} u \, N^{\La,n}(u) \,du}{\int_{0}^{\tau_1^n} N^{\La,n}(u) \,du}.\label{E M condexp}
\end{eqnarray}
Therefore, $\E(M_n^*)  = \E\left[ Y_n \right]$, where
\begin{align}
 Y_n = \frac{\int_{0}^{\tau_1^n} u \, N^{\La,n}(u) \,du}{\int_{0}^{\tau_1^n} N^{\La,n}(u) \,du}.
\label{E M condexp2}
\end{align}
Due to (\ref{ET cond dist}), the variable $Y_n$ is equal in law to
\begin{equation}
\label{Eeqllaw}
\hat Y_n = \frac{\int_{\tau_n}^{\tau_1} (u-\tau_n) \, N^{\La}(u)
\,du}{\int_{\tau_n}^{\tau_1} N^{\La}(u) \,du} =
\frac{\int_{\tau_n}^{\tau_1} u \, N^{\La}(u)
\,du}{\int_{\tau_n}^{\tau_1} N^{\La}(u) \,du} -\tau_n, \mbox{ given $\FF_{\tau_n}$,
on the event $\cE_n$.}
\end{equation}
Using Lemma \ref{L:asymptotics}
one can analyze the asymptotic behavior of ${\hat Y_n}/{\fg(n)}$
in each of the three cases  $\bes >1$, $\bes<1$ and $\bes=1$
(corresponding respectively to $\alpha < 3/2$, $\alpha>3/2$ and
$\alpha = 3/2$).
First observe that $\tau_n / \fg(n) \to 0 $ almost surely if $\alpha \ge
3/2$,
and otherwise $\tau_n / \fg(n) \to c $ almost surely, where $c$ is the
constant from claim 1 of Lemma \ref{L:asymptotics}.
One can apply claims 4 and 5 (plugging in $\tau_n$ as $x$) of Lemma \ref{L:asymptotics} to the first term in (\ref{Eeqllaw}).
More precisely, if we let $c_3=c_2\bes/c^{ \bes}$ and $c_4= \bes /c$, then
\begin{enumerate}
\item if $\bes >1$ then $\int_{\tau_n}^{\tau_1} u \, N^{\La}(u) \,du \sim c_2 \tau_n^{-\bes+1}$ so ${\hat Y_n}/{\fg(n)} \sim c_3-c$. almost surely.

\item if $\bes<1$ then $\int_{\tau_n}^{\tau_1} u \, N^{\La}(u) \,du \sim Y$ so ${\hat Y_n}/{\fg(n)} \sim c_4 Y $, almost surely.

\item if $\bes=1$ then $\int_{\tau_n}^{\tau_1} u \, N^{\La}(u) \,du \sim -c_2 \log \tau_n\sim c_2 (\alpha-1)\log n$  so ${\hat Y_n}/{\fg(n)} \sim (\alpha-1)c_2/c$, almost surely.
\end{enumerate}
With a slight abuse of notation, let $Y:=\lim_{n\to \infty} {\hat Y_n}/{\fg(n)}$, almost surely. Clearly $\P(Y\geq 0)=1$. Denote by $\cD_c(Y)$ the set of points of continuity for the distribution function of $Y$.
Then for any $x>0$ in $\cD_c(Y)$ and any sequence $(B_n)_{n\geq 1}$ of events, where $B_n\in \FF_{\tau_n}$, $n\geq 1$, we have
\begin{equation}
\label{Econdconve}
 \E(\indic{B_n} \E(|\indica{\hat Y_n/\fg(n) \leq x} - \indica{Y\leq x}|\,|\,\FF_{\tau_n}) )=o(1),
 \mbox{ as }n\to \infty.
\end{equation}
 We claim that
\begin{equation}\label{claimYn}
\frac{Y_n}{\fg(n)} \Rightarrow Y, \mbox{ as $n\to \infty$},
\end{equation}
which can be verified as follows. Note that, due to (\ref{ET cond dist}), for each fixed $x>0$
\[
\P\left(\frac{Y_n}{\fg(n)} \leq x\right) =
\frac{\E[\indic{\cE_n}\P(\hat Y_n/\fg(n) \leq x\,|\,\FF_{\tau_n}) ]}{\P(\cE_n)},
\]
Backward martingale convergence and measurability $Y\in \FF_0$ imply
$\lim_n \P(Y\leq x\,|\,\FF_{\tau_n})= \P(Y\leq x\,|\,\FF_0)= \P(Y\leq x)$.
Combined with (\ref{Econdconve}) and the fact $\liminf_n \P(\cE_n)>0$, this
gives $\lim_n \P\left(\frac{Y_n}{\fg(n)} \leq x\right)= \P(Y\leq x)$, for each $x\in \cD_c(Y)$, or equivalently, the convergence (\ref{claimYn}).

To conclude (\ref{eq:M_n b}) from \eqref{claimYn}, it thus suffices to show that $(Y_n/\fg(n))_{n\geq 1}$ is a uniformly integrable family.
In fact we will now show that this family is uniformly bounded in $L^2$.
Due to (\ref{E M condexp2}), we have  $\P(Y_n\leq \tau_1^n)=1$, and in particular
$$
Y_n \indica{ \tau_1^n \le \fg(n)} \le \fg(n), \mbox{ almost surely}.
$$
Due to claims 4 and 5 of Lemma \ref{L:asymptotics}  we know that $\fg(n) \sim c \int_{t_n}^1 u v(u) du / \int_{t_n}^1 v(u)du$ for some $c=1/C\in (0,\infty)$. Therefore
\begin{align}
\frac{Y_n}{\fg(n)} & \le C \frac{\displaystyle \int_{0}^{\tau_1^n} u \, N^{\La,n}(u) \,du}{\displaystyle \int_{t_n}^{1}  u v(u) du }
\cdot \frac{\displaystyle \int_{t_n}^{1} v(u)du }{\displaystyle \int_{0}^{\tau_1^n} N^{\La,n}(u) \,du}  \indica{ \tau_1^n > \fg(n)}  +1. \label{Eletit}
\end{align}
Denote by $A_n$ (resp,~$B_n$) the first (resp.~second) ration on the LHS of (\ref{Eletit}), so that
$\frac{Y_n}{\fg(n)}\le C A_n \cdot B_n \indica{\tau_1^n > \fg(n)}  +1$.
We will bound separately the terms $A_n$ and $B_n \indica{\tau_1^n > \fg(n)}$.
Let $b_n := \min(1-t_n, \tau_1^n)\le \tau_1$ (note that $b_n \to 1 \wedge \tau_1$ a.s.), choose some $k_0$ such that $t_{k_0}<1/2$, and henceforth assume WLOG that $n\geq k_0$. Then $t_n \leq 1/2$ so that
\begin{align*}
A_n 
\le   \left( { \int_{0}^{b_n} u \, N^{\La,n}(u) \,du  \over \int_0^{b_n} (u+t_n)v(u+t_n) du } + { \int_{b_n}^{\tau_1} uN^{\La,n}(u) du \over \int_{1/2}^1 uv(u)du } \right)
\end{align*}
For $h_1,h_2$ two strictly positive integrable functions over some interval $[a,b]$ we always have that
$
\frac{\int_a^b h_1(u)du}{\int_a^b h_2du} \le \sup_{u \in [a,b]} \frac{h_1(u)}{h_2(u)}.
$
Therefore
\begin{align}
A_n &\le  \sup_{u \in [0, b_n]} \frac{N^{\La,n}(u)}{v(t_n+u)}  + O\left( \int_{b_n}^{\tau_1} u N^{\La,n}(u)du \right)\nonumber \\
&= \sup_{u \in [0, 1]} \frac{N^{\La,n}(u)}{v(t_n+u)}  +  O\left(N^{\La}(1/2) (\tau_1)^2 \right),
\label{EAnbnd}
\end{align}
due to $b_n \le 1$ and $N^{\La,n}(b_n)=\indica{b_n = \tau_1^n<1-t_n} + N^{\La,n}(1-t_n)\indica{b_n = 1- t_n\ge 1/2} \le N^\La(1/2)$, a.s.
Similarly we have
\begin{align*}
 B_n \indica{\tau_1^n > \fg(n)}
 &=   {  \int_{0}^{\tau_1^n} v(u+t_n)du  \over \int_{0}^{\tau_1^n}  N^{\La,n}(u) \,du  } \cdot {  \int_{0}^{1-t_n} v(u+t_n)du  \over  \int_{0}^{\tau_1^n} v(u+t_n)du   } \indica{\tau_1^n > \fg(n)} \\
 &\le \sup_{u \in [0, \tau_1]} \frac{v(t_n+u)}{N^{\La,n}(u)} \cdot {  \int_{t_n}^{1} v(u)du  \over  \int_{t_n}^{t_n+\fg(n)} v(u)du   }.
\end{align*}
Observe that due to $t_n \sim c n^{1-\alpha}$ (claim 1 of Lemma \ref{L:asymptotics}) we have $t_n = o(\fg(n))$ if $\alpha \ge 3/2$, and $t_n/\fg(n) \to c' \in (0,\infty)$ if $\alpha <3/2$. Due to the asymptotic form (\ref{ENasym streg}) for $v$, the sequence
$(\int_{t_n}^{1} v(u)du  /  \int_{t_n}^{t_n+\fg(n)} v(u)du)_{n\geq k_0}$ is uniformly bounded. We conclude that
\begin{align}
\label{EBnbnd}
 B_n \indica{\tau_1^n > \fg(n)} = O\left( \sup_{u \in [0, \tau_1]} \frac{v(t_n+u)}{N^{\La,n}(u)}\right).
 \end{align}
Combining (\ref{Eletit})--(\ref{EBnbnd}), we obtain
\begin{align*}
\frac{Y_n}{\fg(n)}
=O \left[\left(\sup_{u \in [0, 1]} \frac{N^{\La,n}(u)}{v(t_n+u)}  +  N^{\La}(1/2) (\tau_1)^2 \right)  \left( \sup_{s \in [0,\tau_1]}\frac{v(t_n+u)} {N^{\La,n}(u)}  \right)\right] +1, \ \forall n\geq k_0.
\end{align*}
By the Cauchy-Schwarz inequality, it suffices to show that, for all $n$ sufficiently large,  each factor in the brackets above is bounded in $L^4$. This follows immediately from:
\begin{enumerate}
\item[(i)] \cite[Eq. (38)]{bbl1} applied with $s=1$ and $n\geq n_0 \vee k_0$ (see above (33) in \cite{bbl1} for the definition of $r(x;s)$, Lemma 19 in \cite{bbl1} for the choice of $n_0$, and note that this step uses the condition $\La[1-\eta,1]=0$ for some $\eta>0$),
\item[(ii)] $N^\La(1/2) \in L^p$ for all $p\ge 1$ (a consequence of Theorem 2 in \cite{bbl1}).
\item[(iii)] claim 3 of Lemma \ref{L:asymptotics}.
\end{enumerate}
This proves the uniform integrability of $(Y_n/\fg(n))_{n\geq 1}$, and completes the proof of
(\ref{eq:M_n b}).

\subsection{Proof of Theorem \ref{T:A_n}}
\label{S:proof of An}
Recall the construction of Section \ref{S:asym samp},
where the genealogy with mutations is realized for all $n$ simultaneously, with nice monotonicity properties.
In this and the next subsection we will often refer to it under the name the {\em full genealogy (construction or coupling)}. 
Furthermore, we are going to prove theorems \ref{T:A_n} and \ref{T:spectrum} under the additional assumption
$$
\exists \eta>0 \ : \ \La[1-\eta,1]=0,
$$
and we will then explain how this hypothesis can be relaxed in Section \ref{S:relaxing}.

{\em Case $X_n = S_n$.}
For each $s>0$ we have, due to Theorem 5 in \cite{bbl1},
\begin{equation}
\lim_{n\to \infty} \frac{\int_0^s N^{\La,n}(t) \,dt}{\int_0^s v(t_n+t)\, dt} =
1, \mbox{ in probability}.
\label{E:Thm5BBL}
\end{equation}
For the Kingman and the regular variation coalescents
(see Definition \ref{D:regvar}) the above convergence holds almost surely. The total length of
$\bT_n$ is $L_n=\int_0^{\tau_1^n} N^{\La,n}(t) \,dt$.
Observe that $|\int_1^{\tau^n_1} N^{\La,n}(u)du /  \int_0^s v(t_n+t)\, dt|\to 0$ almost surely since in all cases $|\int_1^{\tau^n_1} N^{\La,n}(u)du | \le |\int_1^{\tau_1} N^\La(u) du | <\infty$ a.s., and
$\int_0^s v(t_n+t)\, dt$ diverges in $n$.
Applying \eqref{E:Thm5BBL} with $s=1$, we deduce that
$$
L_n \sim \int_0^1 v(t_n+t)\, dt
$$
in probability (i.e., the ratio of the two sides tends to 1 in probability), and almost surely in the regular variation case.
Using the facts that $v(t_n)=n$ and $v'(q)=-\psi(v(q))$ for all $q>0$,
and applying a change of variables $q=v(t)$,
we obtain
$$
\int_0^1 v(t_n+t)\, dt = 
 \int_{v(1+t_n)}^{n}
 \frac{q}{\psi(q)}\,dq \sim \int_{v(1)}^{n}
 \frac{q}{\psi(q)}\,dq \sim \int_{1}^{n}
 \frac{q}{\psi(q)}\,dq , \mbox{ as }n\to \infty,
$$
since $v(1+t_n) \to v(1)\in(0,\infty)$, and since the integral of $q/\psi(q)$ is finite (resp.~infinite) over $[a,b]$ (resp.~$[a,\infty)$), for all fixed $a,b\in (0,\infty)$.

Recall again the fact that $S_n$ has Poisson ($\theta L_n$) distribution, given $\bT_n$.
Now due to $L_n \to \infty$, almost surely, we obtain
\begin{equation}\label{E:Sn}
 \frac{S_n}
{\displaystyle \int_1^n {q}{\psi(q)^{-1}}\,dq }\longrightarrow \theta,
\end{equation}
in probability, as claimed.
In the regular variation case, this last convergence holds again in the
almost sure sense due the fact that in the full genealogy coupling
$S_n \leq S_{n+1}$, for all $n$, almost surely, and that $\int_1^n {q}{\psi(q)^{-1}}\,dq$ is asymptotic to a multiple of $n^{2-\alpha}$.
To obtain the final claim, we recall (\ref{Epsi strongreg}).
Integrating the RHS and recalling \eqref{E:Sn}, we deduce that $S_n \sim \theta B n^{2-\alpha}$, almost surely, where $B$ is as stated in Theorem \ref{T:A_n}, in consistence with Theorem 1.9 of \cite{bbs1}.
\hfill$\Box$

\medskip
For $X_n=A_n$, our strategy is as follows: we first establish the convergence in probability of $A_n$ in the general case, and then show the almost sure convergence in the strong regular variation case.

\noindent
{\em Case $X_n = A_n$, convergence in probability.}
In the full genealogy construction, we have $A_n \le  S_n+1$ for each $n$, almost surely. Therefore, \eqref{E:Sn} implies that for any $\eps>0$,
\begin{equation}
\label{E A vs L limsup}
\P\left({A_n}\ge (1+\eps) \theta \int_1^n {q}{\psi(q)^{-1}}\,dq\right) \to 0.
\end{equation}
It remains to prove the matching lower-bound.
To do this, for each mutation (or mark) $x$ on $\bT_n$, consider the path $\gamma  = \gamma(x) \subset \bT_n$ defined as follows.
Consider a mutation or mark $x \in \bT_n$ with age $t$. Then $\gamma(x)$ is defined as the path connecting the mark to the leaf carrying the smallest label possible. Since all points of $\gamma$ lie below $x$, the age of any point $y \in \gamma$ is at most $t$.
For example, the $\gamma$ of the mutation encircled in gray on Figure 1 is the path linking it to the leaf labeled by $1$. 
We say that a mark $x$ is \emph{unblocked}
if $\gamma(x)$ carries no other mutation than $x$, and otherwise call it {\em blocked}.
Observe that if $x$ is unblocked then it is guaranteed to contribute one allelic type to $A_n$.
Intuitively, it is rather likely that $\gamma(x)$ is unblocked.
Indeed, since the age $M_n$ of a randomly chosen point on $\bT_n$ is typically small, then $e^{-\theta M_n} \approx 1-\theta M_n$, so the probability that a typical mutation is blocked is of order
$\theta \E (M_n) \to 0$. This suggests that the proportion of blocked mutations is negligible, which is sufficient to yield the desired result.

More rigorously, given  $ \bT_n$ and $S_n$, the mutations fall on $\bT_n$ as $S_n$ i.i.d.~uniformly chosen random points.
For $1\le i \le S_n$, let $K_{i,n}$ be the ``good'' event that the $i$th mutation
is unblocked, and define
\[
Y_n:=\sum_{i=1}^{S_n} \indic{K_{i,n}},
\]
the total number of unblocked mutations. As already argued, we have $Y_n \leq A_n \leq S_n+1$ almost surely, so in view of \eqref{E:Sn} it suffices to prove
\begin{equation}
\label{E S vs Y}
\lim_{n\to \infty}\frac{Y_n}{S_n} = 1,\mbox{ in probability}.
\end{equation}
Note that, given $\bT_n$ and $S_n$, the events $(K_{i,n})_{i=1,\ldots, S_n}$ are exchangeable.
In particular, almost surely,
\[
\P(K_{1,n}| \bT_n, S_n)= \P(K_{i,n}| \bT_n, S_n), \ i=1,\ldots, S_n.
\]
Note in addition that the age of the mutation corresponding to $K_{1,n}$ is equal in distribution to $M_n$
from Proposition \ref{P:M_n}.
Due to the above discussion, we have
\begin{equation}
\label{E: K1N}
\P (K_{1,n} \vert \bT_n, S_n, M_n) = \left( 1-\frac{M_n}{L_n}\right)^{(S_n-1)_+}
\end{equation}
almost surely on the event $\{S_n>0 \}$. We extend the definition of $K_{1,n}$ using the above on the complement $\{S_n= 0\}$,
making $K_{1,n}$ certain in this case.
Fix $\eps>0$ and note that by Markov's inequality, we have
\[
\P( Y_n \le (1-\eps) S_n| \bT_n, S_n) = \P( S_n - Y_n \ge \eps S_n|\bT_n, S_n) \leq \frac{\E(S_n - Y_n|\bT_n, S_n)}{\eps S_n},
\]
with the convention $0/0=1$. Therefore, due to the above discussion and (\ref{E: K1N}), we obtain
\begin{align}
\P(Y_n \le (1-\eps) S_n)  &\le \frac1\eps \E\left[ \frac{\E(S_n- Y_n \vert  \bT_n, S_n)}{S_n} \right] =\frac1\eps\E\left[    \P(K_{1,n}^c  \vert \bT_n, S_n,M_n) \right]\nonumber\\
&\le \frac1\eps\E\left[1- \left( 1-\frac{M_n}{L_n}\right)^{(S_n-1)_+}  \right].\label{bdprob}
\end{align}
The random variable (conditional probability) in this last expectation is bounded by $1$, almost surely.
Therefore, in order to show that it converges to $0$ in the mean (in $L^1$), it suffices to show that it converges to $0$ in probability.
Now note that, since $1-(1-x)^n \leq nx$ for $n\in \N$ and $x\geq 0$,
\begin{equation}
\label{Eupperbd easy}
1-\left( 1-\frac{M_n}{L_n}\right)^{(S_n-1)_+} \leq 2\theta M_n \indica{S_n/L_n \leq 2\theta} + \indica{S_n/L_n > 2\theta}
\end{equation}
So, for a fixed small $\delta >0$, we have
\begin{equation}
\label{E nicebound}
\P\left(1-\left( 1-\frac{M_n}{L_n}\right)^{(S_n-1)_+} >\delta\right) \leq \P( 2\theta M_n > \delta) + \P(S_n > 2 \theta L_n).
\end{equation}
Due to Proposition \ref{P:M_n} (a), the first term on the RHS in (\ref{E nicebound}) vanishes as $n\to \infty$, and since
$S_n$ has Poisson (rate $\theta L_n$) distribution, given $L_n$, the second term also vanishes.
Therefore (\ref{bdprob}) converges to $0$ as $n\to \infty$, implying (\ref{E S vs Y}).
\hfill$\Box$

\noindent
{\em Case $X_n=A_n$ with strong $\alpha$-regular variation.}
Here we use a variation of (\ref{Eupperbd easy}):
\begin{equation}
\label{E upperbd intric}
1-\left( 1-\frac{M_n}{L_n}\right)^{(S_n-1)_+} \leq \left( M_n \cdot \frac{S_n}{L_n} \right) \wedge 1 \leq
2\theta M_n \indica{S_n/L_n \leq 2\theta} + \left(M_n \wedge \frac{1}{2\theta}\right) \cdot \frac{S_n}{L_n} \indica{S_n/L_n > 2\theta}.
\end{equation}
Therefore, applying the Cauchy-Schwarz inequality,
\[
\E\left[1-\left( 1-\frac{M_n}{L_n}\right)^{(S_n-1)_+}\right] \leq 2 \theta \E(M_n) + \sqrt{\E[ (M_n \wedge 1/(2\theta))^2] \cdot \E[(S_n/L_n)^2]}.
\]
Due to Proposition \ref{P:M_n} (b), the first term above is $O(n^{-2\delta})$ for some $\delta >0$.
For the second one, note that $\E[ (M_n \wedge 1/(2\theta))^2]\leq \E(M_n)/(2\theta) = O(n^{-2\delta}/\theta)$ and that $\E[(S_n/L_n)^2] = O(\theta^2\vee \theta)$.
Indeed, since $S_n$ is Poisson (rate $\theta  L_n$), given $L_n$, we have (assuming $n\geq 3$)
$$
\E[(S_n/L_n)^2] = \E[\E(S_n^2\vert L_n) /L_n^2] = \E[\theta^2 +\theta/L_n] \leq \theta^2+\theta\E(1/L_3),
$$
where it is simple to verify that $\E(1/L_3) <\infty$.
As a consequence, $\E(1-\left( 1-M_n/{L_n}\right)^{(S_n-1)_+})) = O(n^{-\delta})$.
Due to (\ref{bdprob}), we deduce
\begin{equation}
\label{bdpoly}
\P(Y_n \le (1-\eps) S_n) \le \frac{c n^{-\delta}}{\eps}
\end{equation}
for some $\delta>0$, and $c<\infty$ which depends only on $\theta$. Consider the subsequence $n_k = \lfloor k^{2/\delta}\rfloor$, $k\geq 1$.
By the Borel-Cantelli lemma and \eqref{bdpoly}, $Y_n/S_n$ tends to 1 along the subsequence $(n_k)$.
Moreover, since both $A_n \leq A_{n+1}$ and $S_n \leq S_{n+1}$, for all $n$, almost surely,
we have
\begin{equation}\label{E:convas}
\frac{S_{n_{k}}}{S_{n_{k+1}}}\frac{A_{n_k}}{S_{n_k}} \leq \frac{A_n}{S_n} \leq \frac{A_{n_{k+1}}}{S_{n_{k+1}}}\frac{S_{n_{k+1}}}{S_{n_{k}}} \mbox{ whenever } n\in [n_k, n_{k+1}] \mbox{ for some $k\geq 1$. }
\end{equation}
Since we already verified at the beginning of the argument that $S_{n_k}/S_{n_{k+1}} \sim (n_k/n_{k+1})^{2-\alpha}$, almost surely, and since $(n_k/n_{k+1})^{2-\alpha} \to 1$, as $k\to \infty$,
the almost sure convergence along the subsequence $(n_k)_{k\geq 1}$ and (\ref{E:convas}) imply that $A_n/S_n \to 1$ almost surely. This finishes the proof of Theorem \ref{T:A_n}.
\hfill$\Box$
\begin{remark}
\label{R:xi case}
As already mentioned, the above convergence in probability for $X_n=A_n$ is
proved in~\cite{sampl_xi} for a more general class of regular $\Xi$-coalescents, using
a compact martingale argument that accounts for all the mutations in a dynamic
way (from the point of view of coalescent evolution), which can be easily extended to handle randomly (and nicely) varying mutation rates. However, that approach is not well-suited for obtaining qualitative or quantitative information about a random (typical) mutation.
The present approach could be used even in the setting without martingale structure, once
given the estimates in the form of Proposition \ref{P:M_n}.
Furthermore, the random mutation analysis enables us to easily identify
the asymptotic behavior of $M_{k,n}$ with that of $F_{k,n}$ (see the end of the proof of Theorem \ref{T:spectrum}).
\end{remark}

\subsection{Proof of Theorem \ref{T:spectrum}}
\label{S:proof of T:spectrum}
Recall the setting of Theorem \ref{T:spectrum}.
We first concentrate on the result (\ref{E:allelic spectrum}) in the case of the allelic partition, which we restate here for
convenience: if
$F_{k,n}$ denotes the number of allelic types in the allelic partition carried by exactly $k$ individuals, then for any fixed $
k\ge 1$,
\begin{equation}
\label{E:restatF}
\frac{F_{k,n}}{n^{2-\alpha}} \to \theta B (2-\alpha) \frac{(\alpha -1) \ldots (\alpha +k-3)}{k!}, a.s.
\end{equation}
as $n \to \infty$.
The key to proving \eqref{E:restatF} is to apply Corollary 21 in \cite{ghp},
which could be thought of as a Tauberian theorem for random exchangeable partitions, that establishes the mutual equivalence between the strong almost sure asymptotics
\eqref{E:TAn-strong}, \eqref{E:allelic spectrum}, and \eqref{E:Tfreq}.

We now recall the setting in~\cite{ghp}. Let $\vec{p}=(p_1,p_2,\ldots)$ be a deterministic sequence
such that
$p_1\geq p_2\geq \ldots \geq 0$ and $\sum_i p_1=1$.
Suppose that $\Theta =\Theta_{\vec{p}}$ is an exchangeable random partition on $\N$, obtained by performing
the paintbox construction generated by $\vec{p}$ (see, e.g.,~\cite{aldous_stflour} or Definition 1.2 in~\cite{ensaios}). Let $\Theta^{n}$
denote the restriction of $\Theta$ onto $[n]=\{1, \ldots, n\}$.
Let $K_n$ be the number of blocks in $\Theta^{n}$, and for each $r=1,\ldots,n$, let
$K_{n,r}$ be the number of blocks in $\Theta^{n}$ containing exactly $r$ elements.
The frequency vector $\vec{p}$ is said to be {\em regularly varying with index $\gamma$} if
$$
\sum_i \indica{p_i \ge x} \sim \ell(1/x) x^{-\gamma}
$$
as $x \to 0$, where $\ell$ is a slowly varying function.

\begin{lemma} \emph{(Corollary 21 in~\cite{ghp})} There is equivalence between the following statements.

\begin{enumerate}
\item[(a)] $\vec{p}$ is regularly varying with index
$\gamma$.

\item[(b)] $K_n \sim \Gamma(1-\gamma)n^{\gamma} \ell(n)$, almost surely as $n \to \infty$.

\end{enumerate}
If either (a) or (b) holds, then for each fixed $r\ge 1$,
$$
K_{n,r} \sim \frac{\gamma\Gamma(r-\gamma)}{r!} n^\gamma \ell(n), \mbox{ almost surely.}
$$
\end{lemma}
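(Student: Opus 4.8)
The plan is to recognize this statement as \emph{Karlin's occupancy scheme}: since $\vec p$ is deterministic, the only randomness lives in the paintbox sampling, so $K_n$ (resp.\ $K_{n,r}$) is exactly the number of occupied boxes (resp.\ boxes occupied by exactly $r$ balls) when $n$ balls are thrown independently into boxes $1,2,\ldots$, box $i$ receiving each ball with probability $p_i$. I would encode the frequencies through the counting measure $\rho=\sum_i\delta_{p_i}$ on $(0,1]$ and its tail $\nu(x)=\rho([x,1])=\sum_i\indica{p_i\ge x}$; hypothesis (a) is then precisely that $\nu$ is regularly varying at $0$ with index $-\gamma$, i.e.\ $\nu(x)\sim x^{-\gamma}\ell(1/x)$ as $x\to0$.

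First I would compute the two expectation functionals exactly. Box $i$ is hit by at least one of the $n$ balls with probability $1-(1-p_i)^n$, so
\[
\E[K_n]=\int_0^1\bigl(1-(1-x)^n\bigr)\,\rho(dx),\qquad \E[K_{n,r}]=\binom{n}{r}\int_0^1 x^r(1-x)^{n-r}\,\rho(dx).
\]
Replacing $(1-x)^n$ by $e^{-nx}$ (the discrepancy is controlled because the mass of $\rho$ near $0$ dominates) reduces both to Laplace--Stieltjes transforms of $\rho$. Integrating by parts against $\nu$, substituting $u=nx$, and invoking Karamata's theorem for slowly varying functions yields
\[
\E[K_n]\sim\Gamma(1-\gamma)\,n^\gamma\ell(n),\qquad \E[K_{n,r}]\sim\frac{\gamma\,\Gamma(r-\gamma)}{r!}\,n^\gamma\ell(n),
\]
the second constant arising from $\int_0^\infty u^{r-1-\gamma}e^{-u}(r-u)\,du=\gamma\Gamma(r-\gamma)$. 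Crucially, because $n\mapsto\int_0^1(1-e^{-nx})\rho(dx)$ and $\nu$ are both monotone, Karamata's Tauberian theorem is an \emph{equivalence}, which is what will later deliver the converse implication.

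Second I would pass from expectations to almost sure behaviour. Under uniform sampling the occupancy indicators are negatively associated, giving the clean bound $\var(K_n)\le\E[K_n]$; hence $\var(K_n)/(\E K_n)^2\to0$ and $K_n/\E[K_n]\to1$ in probability. To upgrade this to an almost sure limit I would use that $K_n$ is non-decreasing in $n$: apply Chebyshev along a geometric subsequence $(n_j)$, sum the tail probabilities via Borel--Cantelli, then sandwich a general $n\in[n_j,n_{j+1}]$ by monotonicity together with $\E[K_{n_{j+1}}]/\E[K_{n_j}]\to1$. For $K_{n,r}$, which is \emph{not} monotone, the variance bound still gives convergence in probability, but the almost sure statement needs the observation that adding one ball changes $K_{n,r}$ by a bounded amount, so its oscillation over a short block of indices is dominated by $K_{n_{j+1}}-K_{n_j}=o(n_j^\gamma\ell(n_j))$, again closing the argument by interpolation.

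Finally, the Tauberian direction (b)$\Rightarrow$(a) is where I expect the real work. Given $K_n\sim\Gamma(1-\gamma)n^\gamma\ell(n)$ almost surely, the concentration estimate forces $\E[K_n]$ to share this asymptotic, so the monotone transform $n\mapsto\int_0^1(1-e^{-nx})\rho(dx)$ is regularly varying of index $\gamma$, and the Tauberian half of Karamata's theorem inverts this to $\nu(x)\sim x^{-\gamma}\ell(1/x)$, which is (a). The main obstacle is therefore twofold: controlling the $(1-x)^n$ versus $e^{-nx}$ discrepancy together with the slowly varying factor \emph{uniformly} enough to run Karamata in both directions, and handling the almost sure upgrade for the non-monotone statistic $K_{n,r}$, where the monotonicity trick that works cleanly for $K_n$ must be replaced by a fluctuation bound tied to the increments of the occupancy process.
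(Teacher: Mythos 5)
First, a point of comparison: the paper offers no proof of this lemma at all --- it is quoted verbatim as Corollary~21 of Gnedin--Hansen--Pitman, with pointers to \cite{ensaios} for a sketch and to \cite{Schweinsberg2009} for the in-probability analogue. So you are reconstructing an external result rather than an internal argument. Your architecture is nevertheless the standard one (and essentially Karlin's original 1967 strategy, which underlies \cite{ghp}): identify the paintbox restriction with an occupancy scheme, compute $\E[K_n]$ and $\E[K_{n,r}]$ as transforms of $\rho=\sum_i\delta_{p_i}$, run Karamata in both directions on the monotone tail $\nu$, and upgrade to almost sure convergence by concentration plus interpolation along a subsequence. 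The constants check out: $\int_0^\infty u^{r-1-\gamma}(r-u)e^{-u}\,du=r\Gamma(r-\gamma)-\Gamma(r+1-\gamma)=\gamma\Gamma(r-\gamma)$ as you claim.

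There is, however, a genuine gap in your almost-sure upgrade for $K_{n,r}$. You assert that the oscillation of $K_{n,r}$ over a block $[n_j,n_{j+1}]$ is ``dominated by $K_{n_{j+1}}-K_{n_j}$''. That is false: a ball landing in an \emph{already occupied} box with exactly $r$ balls decreases $K_{\cdot,r}$ by one without changing $K_\cdot$ at all, so over a block in which order $n_j^{\gamma}\ell(n_j)$ boxes are promoted from occupancy $r$ to $r+1$, the statistic $K_{n,r}$ can swing by a quantity comparable to its own magnitude while $K_n$ barely moves. The bound you need is not available from the increments of $K_n$. The standard repair is the one this very paper uses for $\bar M_{k,n}$ and $\bar F_{k,n}$ in the proof of Theorem~\ref{T:spectrum}: work with the tail counts $\bar K_{n,r}=\#\{i:\ \mbox{box $i$ holds at least $r$ balls}\}$, which \emph{are} non-decreasing in $n$, prove $\bar K_{n,r}\sim \bar c_r\, n^{\gamma}\ell(n)$ a.s.\ with $\bar c_r=\sum_{s\ge r}\gamma\Gamma(s-\gamma)/s!<\infty$ by the same subsequence-plus-monotonicity sandwich you use for $K_n$, and then recover $K_{n,r}=\bar K_{n,r}-\bar K_{n,r+1}$ by differencing (the limits differ, so the difference of asymptotics is legitimate). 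A second, smaller soft spot: your variance bound $\var(K_n)\le\E[K_n]$ is justified by negative association of the occupancy indicators, but you then invoke ``the variance bound'' for $K_{n,r}$, whose indicators $\indica{\mbox{box $i$ holds exactly $r$ balls}}$ are not obviously negatively associated; the clean route is Poissonization (independent boxes, hence $\var\le\E$ trivially) followed by de-Poissonization, which is also how \cite{ghp} proceed. With those two repairs the sketch goes through.
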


We refer the reader to Theorem 1.11 in~\cite{ensaios} for an overview and a sketch of proof, and to Schweinsberg~\cite{Schweinsberg2009} for a version of this result where the assumptions and conclusions are convergence in probability, rather than almost surely.

\begin{proof}[Proof of Theorem \ref{T:spectrum}] We apply the above lemma to the
allelic partition $\Theta$, which is an exchangeable random
partition. As the reader is about to see, for this particular application  the almost sure convergence
in \eqref{E:TAn-strong} of Theorem \ref{T:A_n} is crucial.
Since $\Theta$ is random exchangeable, by Kingman's representation theorem (Theorem 1.1 in~\cite{ensaios}),
all the blocks of $\Theta$ have a well-defined asymptotic frequency. We let $\vec{P}$ be the sequence of block
frequencies
in decreasing order. Thus $\vec{P} \in \nabla_{\leq 1}=\{(p_1,p_2\ldots,):p_1\geq p_2\geq \ldots \geq 0,\sum_{i=1}^\infty p_i\leq 1\}$.
Moreover, given $\vec{P}$, $\Theta$
has the law of a paintbox partition derived from $\vec{P}$.
Note that $A_n$ then corresponds to the total number of blocks of $\Theta^n$, while $F_{k,n}$ is the number of blocks of size exactly $k$.
 In particular, since $A_n =o(n)$ almost surely, it
must be that $\P(\vec{P} \in \nabla_1)=1$, where
$\nabla_1=\{(p_1,p_2\ldots,):p_1\geq p_2\geq \ldots \geq 0,\sum_{i=1}^\infty p_i=1\}$.
That is, $\Theta$ has no singletons (or no ``dust") almost surely. Moreover, since
$\P(A_n \sim \theta Bn^{2-\alpha})=1$
by \eqref{E:TAn-strong},
then also
$$\P(A_n \sim \theta Bn^{2-\alpha}|\vec{P})=1, a.s.$$
Therefore, Corollary 21 in~\cite{ghp} implies
that
$$
\P(\vec{P}\mbox{ is regularly varying with index }2-\alpha)=1,
$$
and, moreover that, if $N(x) = \sum_{i\ge 1} \indic{\{P_i \ge x\}}$, then
$$
N(x) \sim \frac{\theta B}{\Gamma(\alpha -1)} x^{\alpha -2}, \mbox{ almost surely}.
$$
Furthermore,
$$
\P\left(\left.F_{k,n} \sim \frac{\theta B\cdot (2-\alpha)\Gamma(k-2+\alpha)}{r! \Gamma(\alpha-1)} n^{2-\alpha}\right|\vec{P}\right)=1, a.s.
$$
Taking expectations in the last identity yields (\ref{E:restatF}).

It remains us to prove (\ref{E:allelic spectrum}) in the case
where $X_{k,n} = M_{k,n}$, the number of genetic types under the infinite sites model.
Observe another
important property of our full genealogy coupling
(cf.~Figure 1): a family of size $k$ in the infinite allele model necessarily descends from the same mutation, and therefore this mutation affects at least $k$ leaves.
Thus, for all $n\ge 1$, and for all fixed $k\in\{1,\ldots,n\}$,
we have that
\begin{equation}\label{E:F<M}
\bar F_{k,n} \le \bar M_{k,n},
\end{equation}
where $\bar F_{k,n} = \sum_{ j = k }^n F_{j,n}$ and $\bar M_{k,n} = \sum_{j=k}^n M_{j,n}$ are the cumulative number of
families of size larger or equal to $k$. Let
$$
c_k= (2-\alpha) \frac{(\alpha -1) \ldots (\alpha +k-3)}{k!}
$$
so that $F_{k,n} \sim n^{2-\alpha}\theta B c_k.$ Observe that $\bar F_{1,n} = A_n$ and thus  (since $\bar F_{k+1,n} = \bar F_{k,n} -  F_{k,n}$) we deduce by induction on $k\ge 1$ that   $\bar F_{k,n}\sim n^{2-\alpha} \theta B \bar c_k$,
where $\bar c_k = \sum_{j=k}^\infty c_j$.
Here we use the fact $\bar c_1= 1$ (see~\cite{bbs2}, Lemma 30 or~\cite{saintflour}, display (3.38)).

Therefore, Theorem \ref{T:spectrum} will be proved, provided we show that, for each fixed $k\ge 1$,
\begin{equation}\label{E:goalfreq}
\bar M_{k,n} \le \bar F_{k,n} + o(n^{2-\alpha}),
\end{equation}
almost surely as $n\to \infty$.
This can be done by the following adaptation of the argument for Theorem \ref{T:A_n}.

Fix $k\ge 1$.
We extend the definition of an unblocked mutation as follows.

Recall that any point $x\in \bT_n$ corresponds uniquely to a block $B$ of the coalescing partition $\Pi_t$, where $t$ is the age of $x$. Suppose that $B=\{i_1 < \ldots < i_m\}$, for some $m\in \N$, and define
$T(x) \subset \bT_n$ to be the restriction of the coalescence subtree generated by the paths that lead from $x$ to the leaves labeled by $i_1, \ldots, i_{m\wedge k}$.
For example, for the mutation encircled in black on Figure 1, this subtree has four leaves labeled by $\{2,3,5,6\}$.
Note furthermore that, for $k=1$, $T(x)$ coincides with the path $\gamma(x)$ defined in the proof of Theorem \ref{T:A_n}, and that the total length of $T(x)$ cannot exceed $k t$.

Let us say that $x \in \bT_n$ is \emph{$k$-unblocked} if $T(x)$ carries no other mark than $x$, and otherwise call it {\em $k$-blocked}.
Similarly to the proof of Theorem \ref{T:A_n}, define $\bar K_{i,n}$ as the event that the $i$th mutation
(picked at random without replacement) is $k$-unblocked, and let $\bar Y_{k,n}:=\sum_{i=1}^{S_n} \indic{\bar K_{i,n}}$.
Reasoning as for \eqref{bdprob}, and using the fact that the length of $T$ which corresponds to the randomly
picked mutation is at most $kM_n$, we obtain, for all fixed $\eps>0$,
\begin{align*}
\P(\bar Y_{k,n}\le (1-\eps) S_n) & \le \frac1\eps \E\left[ 1- \left( 1-\frac{kM_n}{L_n}\right)^{(S_n-1)_+}\right].
\end{align*}
Since $k$ is fixed, the bound (\ref{E upperbd intric}) with $M_n$ replaced by $k M_n$ will lead to
\begin{equation}\label{bdpoly3}
\P(\bar Y_{k,n}\le (1-\eps) S_n) \le \frac{c k n^{-\delta}}{\eps},
\end{equation}
where $\delta$ is as in \eqref{bdpoly}, and $c$ depends only on $\theta$.
Therefore,
we have as before $\bar Y_{k,n}/S_n\to 1$ almost surely along the subsequence $(n_j)$, where
 $n_j = \lfloor j^{2/\delta}\rfloor$, $j\geq 1$.
In particular, $S_{n_j} - \bar Y_{k,n_j} = o(n_j^{2-\alpha})$, $j\geq 1$.

Denote by $\bar M_{k,n}'$ the number of $k$-unblocked mutations that span {\em at least} $k$ leaves. For each such mutation, the corresponding family in the allelic partition is of size at least $k$, so $\bar M_{k,n}' \le \bar F_{k,n}$. Moreover,
$$
0\le \bar M_{k,n} - \bar M_{k,n}' \le S_n - \bar Y_{k,n}
$$
since $S_n - \bar Y_{k,n}$ accounts for all the $k$-blocked mutations, even if they span fewer than $k$ leaves. Thus, due to the previous observations,
\begin{equation}\label{E:MM'}
\bar M_{k,n_j}' = \bar M_{k,n_j} -  o(n_j^{2-\alpha}) \leq \bar F_{k,n_j} \leq \bar M_{k,n_j}, \ j \geq 1,
\end{equation}
implying (\ref{E:goalfreq}) with $n_j$ in place of $n$, and in particular $\bar M_{k,n} \sim \bar F_{k,n}$,  along the subsequence $(n_j)_{j\geq 1}$.
Since we already know that $\bar F_{n,k} \sim \theta B \bar c_k n^{2-\alpha}$ almost surely, and since
both $(\bar F_{n,k})_{n\geq 1}$ and $(\bar M_{n,k})_{n\geq 1}$ are non-decreasing, almost surely,
reasoning as in \eqref{E:convas} gives
$$
\bar M_{k,n} \sim \theta B \bar c_k n^{2-\alpha},
$$
as $n\to \infty$, which finishes the proof of Theorem \ref{T:spectrum}.
\end{proof}

Here is an interesting consequence about the structure of $\bT_n\cap\cP$,
Let $\#_\ell T(x)$ be the number of leaves of $\bT_n$ contained in $T(x)$.
If $x$ is a randomly chosen mutation, then denote its $\#_\ell T(x)$ simply by $\#_\ell T$.
\begin{coro} We have
$
\lim_{k\to \infty} \lim_{n\to \infty} \P(\#_\ell T \geq k)=0.
$
\end{coro}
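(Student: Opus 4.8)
The plan is to reduce this statement to the frequency-spectrum asymptotics for the infinite sites model already obtained in the proof of Theorem~\ref{T:spectrum}. The first observation is that the event $\{\#_\ell T \ge k\}$ does not in fact depend on the truncation built into the definition of $T(x)$. Indeed, writing $m = m(x)$ for the number of all leaves of $\bT_n$ lying below $x$, the subtree $T(x)$ is generated by the paths from $x$ to the $m\wedge k$ lowest-labelled of these leaves, so that $\#_\ell T(x) = m\wedge k$ and hence $\{\#_\ell T(x) \ge k\} = \{m(x) \ge k\}$. In the infinite sites model a mark $x$ affects exactly the $m(x)$ leaves descending from it, so $m(x)$ is precisely the size class of the mutation, i.e.\ $x$ is one of the marks counted in $M_{m(x),n}$.

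Next I would condition on the pair $(\bT_n,\cP)$. Given this data there are $S_n$ marks in total, exactly $M_{j,n}$ of which affect $j$ leaves, and the randomly chosen mutation is uniform among the $S_n$ marks. Therefore, on the event $\{S_n>0\}$,
\[
\P(\#_\ell T \ge k \mid \bT_n,\cP) = \frac{\sum_{j\ge k} M_{j,n}}{S_n} = \frac{\bar M_{k,n}}{S_n}.
\]

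The substance of the argument is then supplied directly by Theorem~\ref{T:spectrum}: its proof establishes $\bar M_{k,n} \sim \theta B\,\bar c_k\, n^{2-\alpha}$ almost surely, while $S_n = \bar M_{1,n} \sim \theta B\, n^{2-\alpha}$ almost surely (using $\bar c_1 = 1$). Consequently $\bar M_{k,n}/S_n \to \bar c_k$ almost surely as $n\to\infty$. Since this ratio is bounded by $1$ and $\P(S_n=0)\to 0$, taking expectations and applying the dominated convergence theorem gives $\lim_{n\to\infty}\P(\#_\ell T\ge k)=\bar c_k$.

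Finally I would let $k\to\infty$. Because $\sum_{j\ge 1} c_j = \bar c_1 = 1 < \infty$, the quantity $\bar c_k = \sum_{j\ge k} c_j$ is the tail of a convergent series and therefore tends to $0$. This yields $\lim_{k\to\infty}\lim_{n\to\infty}\P(\#_\ell T\ge k) = \lim_{k\to\infty}\bar c_k = 0$, as required. Since Theorem~\ref{T:spectrum} does all the real work, no genuinely new difficulty arises here; the only steps requiring a little care are the identification of the conditional probability with $\bar M_{k,n}/S_n$ and the passage of the almost sure convergence through the expectation, which is legitimate precisely because the ratio is bounded and $\P(S_n=0)\to 0$.
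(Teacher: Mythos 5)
Your proposal is correct and follows essentially the same route as the paper: identify the conditional probability given $(\bT_n,\cP)$ with $\bar M_{k,n}/S_n$, invoke the almost sure asymptotics $\bar M_{k,n}\sim\theta B\bar c_k n^{2-\alpha}$ from the proof of Theorem~\ref{T:spectrum} to get the limit $\bar c_k$, pass to expectations by boundedness, and conclude from $\bar c_k\to 0$. Your preliminary remark that $\{\#_\ell T\ge k\}=\{m(x)\ge k\}$ despite the truncation in the definition of $T(x)$ is a worthwhile clarification, but the argument is the paper's.
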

This claim is weaker than the statement that $\#_\ell T$ is stochastically bounded.
\begin{proof}
Given $\bT_n,\cP$,
the probability that $\#_\ell T \geq k$ equals precisely $\bar M_{k,n}/S_n$.
Due to Theorem \ref{T:spectrum}, the almost sure limit of this is $\bar c_k$, defined in the proof above.
Since $\bar M_{k,n}/S_n\in [0,1]$, this convergence is also in $L^1$, and the corollary follows due to
$\lim_k\bar c_k=0$.
\end{proof}

\subsection{Relaxing the condition $\La[1-\eta,1]=0$}
\label{S:relaxing}
Define $\La_\eta(dx):= \La(dx) \indic{[0,1-\eta]}(x)$.
Then it is easy to see (or consult, e.g.,~\cite{bbl1}) that
there exists a path-wise full genealogy coupling of the corresponding $\La$-coalescent and $\La_\eta$-coalescent,
so that, almost surely,
for each $n\geq 1$,
$N^{\La,n}(t) = N^{\La_\eta,n}(t)$, for all $t\in [0, T_\eta]$, and
$N^{\La,n}(t) \leq N^{\La_\eta,n}(t)$ for all $t> T_\eta$, where $T_\eta$ is an exponential (rate $\int_{(1-\eta,1]} 1/x^2\,\La(dx)<\infty$) random variable.
By the ``full genealogy'' coupling, we also mean that the same realization of the mutation process $\cP$ is used for both the
restriction of $\bT_n^\La$ onto $[0,T_\eta]$, and the restriction of $\bT_n^{\La_\eta}$ onto $[0,T_\eta]$, simultaneously for all $n$.

The crucial fact is that then the family of non-negative random variables
\[
\max\left\{\int_{T_\eta}^{1\vee \tau_1^{n;\La}} N^{\La,n}(t)\,dt,\,
\int_{T_\eta}^{1\vee \tau_1^{n;\La_\eta}} N^{\La_\eta,n}(t)\,dt\right\}, \ n \geq 1,
\]
is bounded from above by a finite random variable, almost surely.
Therefore, $L_n^\La \sim L_n^{\La_\eta}$ almost surely, as well as, $S_n^\La \sim S_n^{\La_\eta}$
and $A_n^\La \sim A_n^{\La_\eta}$, again almost surely, as $n\to \infty$.
This implies Theorem \ref{T:A_n} in the general case.
And similarly, in the above coupling, we have
 $F_{n,k}^\La \sim F_{n,k}^{\La_\eta}$ and $M_{n,k}^\La \sim M_{n,k}^{\La_\eta}$, for each fixed $k$, almost surely as $n\to \infty$, yielding Theorem \ref{T:spectrum}.

\end{document}